\newtheorem{theorem}{Theorem}[section]
\newtheorem{lemma}[theorem]{Lemma}
\newtheorem{corollary}[theorem]{Corollary}
\newtheorem{proposition}[theorem]{Proposition}
\newenvironment{proof}{{\bf Proof.}}{\hfill$\square$\vskip 2ex}
\newcommand{\R}{\mathbb{R}}
\newcommand{\N}{\mathbb{N}}
\newcommand{\const}{\mathrm{const}}
\newcommand{\norm}[1]{\left\lVert #1 \right\rVert}
\newcommand{\LL}{\mathcal{L}}
\newcommand{\sgn}{\mathrm{sign}}
\newcommand{\Vect}{\mathrm{Vect}}
\title{Dissipative prolongations of the multipeakon solutions to the Camassa-Holm equation}
\author{Wojciech Kry\'nski\thanks{
Institute of Mathematics, Polish Academy of Sciences, ul.~\'Sniadeckich 8, 00-656 Warszawa, Poland
\newline 
E-mail: krynski@impan.pl
}}
\begin{document}
\maketitle
\begin{abstract}
Multipeakons are special solutions to the Camassa-Holm equation described by an integrable geodesic flow on a Riemannian manifold. We present a bi-Hamiltonian formulation of the system explicitly and write down formulae for the associated first integrals. Then we exploit the first integrals and present a novel approach to the problem of the dissipative prolongations of multipeakons after the collision time. We prove that an $n$-peakon after a collision becomes an $n-1$-peakon for which the momentum is preserved.
\end{abstract}

\section{Introduction}
We shall consider the Camassa-Holm equation \cite{CH} in the following form
\begin{equation}\label{eqCH}
u_t-u_{xxt}+3uu_x-2u_xu_{xx}-uu_{xxx}=0.
\end{equation}
An $n$-peakon solution $u(t,x)$, where $t\geq 0$ and $x\in\R$, to equation \eqref{eqCH} is given by the formula
$$
u(t,x)=p_1(t)e^{-|x-q_1(t)|}+\cdots+p_n(t)e^{-|x-q_n(t)|}
$$
where $(q,p)=(q_i,p_i)_{i=1,\ldots,n}$ solves the Hamiltonian system
\begin{equation}\label{hameq}
\dot q_i=\frac{\partial H}{\partial p_i},\qquad \dot p_i=-\frac{\partial H}{\partial q_i},\qquad i=1,\ldots,n,
\end{equation}
with
$$
H(q,p)=\frac{1}{2}\sum_{i,j=1}^np_ip_je^{-|q_i-q_j|}.
$$
Denote 
$$
h=(e^{-|q_i-q_j|})_{i,j=1,\ldots,n}
$$
and
$$
g=h^{-1}.
$$
It follows that the $n$-peakon solutions to the Camassa-Holm equation are in a one to one correspondence with geodesics $t\mapsto q(t)$, $t\geq 0$, on $n$-dimensional manifold with coordinates $q=(q_1,\ldots,q_n)$ and metric $g$. The metric is well defined and has Riemannian signature unless $q_i=q_j$ for some $i\neq j$. If a geodesic approaches the singular set $\{q_i=q_j\}$ as $t\to t^*\in\R$ then we say that  two peakons collide. The corresponding time $t^*$  is called the collision time. In this paper we address the problem of a prolongation of an $n$-peakon solution after a collision. We consider dissipative weak solutions.

\paragraph{Dissipative weak solutions.} A weak solution $u=u(t,x)$ to the Camassa-Holm equation is called dissipative \cite{BC} if the following two conditions are satisfied
\begin{align}
&\partial_xu(t,x)\leq \mathrm{const}\left(1+\frac{1}{t}\right),\label{dis1}\\ 
&\norm{u(t,.)}_{H^1(\R)}\leq\norm{u(0,.)}_{H^1(\R)},\label{dis2}
\end{align}
for all $t>0$. It is known \cite{BC,J} that the dissipative weak solutions are unique.
Note that \eqref{dis1} is the Oleinik-type condition and \eqref{dis2} can be thought of as a weak-energy condition. It can be easily checked that both \eqref{dis1} and \eqref{dis2} are satisfied for $n$-peakons until the collision time. Indeed, on the one hand, $\partial_xu(t,x)$ is decreasing in time, and, on the other hand,
$$
\norm{u(t,.)}_{H^1(\R)}=H(q(t),p(t)),
$$
i.e. $\norm{u(t,.)}_{H^1(\R)}$ is constant. 

\paragraph{Overview.}
The Camassa-Holm equation is integrable \cite{CH,CHH,C1}. This manifests, for instance, in a fact that the inverse-scattering method can be applied for \eqref{eqCH} (see \cite{BSS, BSS2,C2} and also \cite{J,HR,LZ}) or that \eqref{eqCH} has a bi-Hamiltonian formulation \cite{KM}. Our aim in the next section is to exploit the integrable nature of the Camassa-Holm equation and write down a bi-Hamiltonian system explicitly for \eqref{hameq}. For this we need an additional Hamiltonian function alongside with a symplectic structure that is compatible with the standard one underlying \eqref{hameq}. The additional Hamiltonian function is constructed as the conserved quantity, which will be referred to as the momentum, that is associated to a Killing vector field of metric $g$. Having the bi-Hamiltonian structure we get a sequence of first integrals for \eqref{hameq} which appear to be polynomial in $p$. We present explicit formulae for the first integrals in Theorem \ref{thm_first_int}.

The  bi-Hamiltonian formulation is utilized in Sections 3 and 4 to analyse the behaviour of the solutions to \eqref{hameq} near the collision points. Section 3 is devoted to the simpler case of 2-peakons, while in Section 4 we treat the general case of $n$-peakons. The problem of dissipative prolongation of 2-peakons has been solved recently in \cite{GH}. We present an alternative approach in this case that is based on geometric properties of the first integrals (Theorem \ref{thm2}). In higher dimensions we extensively use the bi-Hamiltonian structure which allow us to deal with the solutions at singular points (Lemma \ref{lemma4}) and consequently prove the main Theorem \ref{thm3}. This is a novel approach to the problem of the dissipative prolongation of multipeakons previously investigated in \cite{BC,HR}. We believe that our method is of independent interest and can be further exploited in other works.

The key ingredient of the proof relies on the continuity of roots of polynomials \cite{W}. Namely, this fact is used to prove that a geodesic $t\mapsto q(t)$ behaves regularly in a neighbourhood of the singular set. We exploit here an observation that the first integrals we get from the bi-Hamiltonian approach are polynomial in $p$. However, an additional reasoning is needed to overcome a difficulty that comes form the fact that $p$ explodes in a neighbourhood of the singular set.

We refer to recent papers \cite{Ciin,EG,GH,J} for more information on the current progress in the theory of the Camassa-Holm equation. In particular \cite{Ciin} is devoted to geometric properties of $g$. 

\paragraph{Acknowledges.} I am very thankful to Tomasz Cie\'slak for presenting me the problem considered in this paper and for inspiring discussions.

\section{Momentum and integrability}
We start with the following observation.
\begin{proposition}\label{prop1}
The vector field $X=\sum_{i=1}^n\partial_{q_i}$ is a Killing vector field for the metric $g$.
\end{proposition}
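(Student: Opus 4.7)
The plan is to reduce the claim to the translation-invariance of the matrix $h$, which is manifest from the formula $h_{ij}=e^{-|q_i-q_j|}$ depending only on the differences $q_i-q_j$.

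First I would note that the flow of $X=\sum_{i=1}^n\partial_{q_i}$ on $\R^n$ is the diagonal translation $\phi_s(q_1,\ldots,q_n)=(q_1+s,\ldots,q_n+s)$. Since any such translation leaves each difference $q_i-q_j$ invariant, it leaves each entry $h_{ij}=e^{-|q_i-q_j|}$ invariant, hence $\phi_s^*h=h$. Passing to the (matrix) inverse gives $\phi_s^*g=g$, so $\phi_s$ is an isometry and $X$ is Killing.

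To present the argument in infinitesimal form, I would compute $\LL_X h$ in coordinates. Because $X$ is a constant-coefficient coordinate vector field, the Lie derivative of the components equals the coordinate derivative along $X$:
$$
(\LL_X h)_{ij}=\sum_{k=1}^n\frac{\partial h_{ij}}{\partial q_k}.
$$
Only the terms $k=i$ and $k=j$ contribute, and they are opposite in sign:
$$
\frac{\partial h_{ij}}{\partial q_i}=-\sgn(q_i-q_j)\,e^{-|q_i-q_j|},\qquad \frac{\partial h_{ij}}{\partial q_j}=\sgn(q_i-q_j)\,e^{-|q_i-q_j|},
$$
so $(\LL_X h)_{ij}=0$ on the regular set $\{q_i\neq q_j\}$ where $g$ is defined. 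Differentiating the matrix identity $gh=I$ along $X$ then yields $\LL_X g=-g(\LL_X h)g=0$, which is exactly the Killing condition for $g$.

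There is no substantial obstacle; the only thing to be a little careful about is justifying the reduction $\LL_X h=0\Rightarrow\LL_X g=0$ via the inverse matrix identity (rather than trying to differentiate $g$ directly, whose entries are complicated rational expressions in the $e^{-|q_i-q_j|}$). The translation-invariance of $h$ makes both the finite and infinitesimal arguments essentially immediate.
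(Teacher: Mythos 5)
Your proof is correct. The paper in fact gives no explicit proof of this proposition --- it only recalls the definition of a Killing field via $\LL_Xg=0$ --- and your translation-invariance argument (in either its finite form $\phi_s^*h=h$ or the infinitesimal computation $(\LL_Xh)_{ij}=0$ followed by passing to the inverse matrix) is precisely the intended justification, since every entry $h_{ij}=e^{-|q_i-q_j|}$ depends only on the differences $q_i-q_j$.
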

Recall that a vector field $X$ is a Killing vector for $g$ if its flow preserves the metric. Equivalently
$$
\LL_Xg=0,
$$
where $\LL_X$ stands for the Lie derivative associated to $X$. This can be rewritten as
\begin{equation}\label{killing}
Xg(Y,Z)=g([X,Y],Z)+g(Y,[X,Z]).
\end{equation}
which should hold for arbitrary vector fields $Y,Z$, where $[.,.]$ is the Lie bracket of vector fields. We get the following

\begin{proposition}\label{prop2}
The function
$$
H_0(q,p)=p(X)=\sum_{i=1}^np_i
$$
is a first integral of \eqref{hameq}. Along a geodesic $t\mapsto g(t)$ the following holds $H_0(q(t),p(t))=g(\dot q(t),X)$.
\end{proposition}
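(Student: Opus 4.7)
The first claim is an instance of Noether's theorem: a Killing vector field on a Riemannian manifold produces a conserved momentum for the geodesic flow. I would establish $\{H_0,H\}=0$ by a short direct route that bypasses the general machinery. Since $H_0=\sum_i p_i$ has no $q$-dependence,
$$
\{H_0,H\}=\sum_i\left(\frac{\partial H_0}{\partial q_i}\frac{\partial H}{\partial p_i}-\frac{\partial H_0}{\partial p_i}\frac{\partial H}{\partial q_i}\right)=-\sum_i\frac{\partial H}{\partial q_i}=-X(H).
$$
But $H$ depends on $q$ only through the differences $q_i-q_j$ via the factors $e^{-|q_i-q_j|}$, and the flow of $X=\sum_i\partial_{q_i}$ translates every coordinate by the same constant, leaving those differences fixed. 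Hence $X(H)=0$. Alternatively, one can argue more geometrically: the Hamiltonian flow of $p(X)$ on $T^*\R^n$ is the cotangent lift of the flow of $X$, and the kinetic energy $H=\tfrac12 h(p,p)$ is invariant under this lift precisely because $\LL_Xg=0$ from Proposition~\ref{prop1} (equivalently $\LL_Xh=0$).

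For the second statement, $H_0(q(t),p(t))=g(\dot q(t),X)$, I would use the Legendre transform built into Hamilton's equations. From $\dot q_i=\partial H/\partial p_i=\sum_j h_{ij}p_j$, i.e.\ $\dot q=hp$, one reads off $p=g\dot q$, hence $p_i=\sum_j g_{ij}\dot q_j$. Using that all components of $X$ equal $1$, one gets
$$
H_0=\sum_i p_i = \sum_{i,j}g_{ij}\dot q_j = \sum_{i,j}g_{ij}X^i\dot q_j = g(X,\dot q),
$$
as claimed.

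There is no real obstacle here; Proposition~\ref{prop1} has already done the work, and the standard Hamiltonian Noether mechanism converts it into Proposition~\ref{prop2}. If anything deserves attention, it is the consistency check that the direct Poisson-bracket computation and the geometric quantity $g(\dot q,X)$ genuinely describe the same conserved function on phase space, which is precisely what the Legendre transform in the previous paragraph delivers.
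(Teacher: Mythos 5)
Your proof is correct, but the route for the conservation claim differs from the paper's. The paper first identifies $H_0(q(t),p(t))=g(\dot q(t),X)$ via $p(t)=g(\dot q(t),\cdot)$ (exactly your Legendre-transform step), and then invokes the classical Riemannian fact that the inner product of the geodesic velocity with a Killing field is constant, proving it with the Levi-Civita connection: $\frac{d}{dt}g(\dot q,X)=g(\dot q,\nabla_{\dot q}X)=\frac12 Xg(\dot q,\dot q)-\frac12 Xg(\dot q,\dot q)=0$, using \eqref{killing} together with metricity and torsion-freeness. You instead compute $\{H_0,H\}=-\sum_i\partial_{q_i}H=-X(H)=0$ directly from the translation invariance of $H$ in the differences $q_i-q_j$; this is shorter, entirely elementary, and does not even require Proposition~\ref{prop1} as an input (the translation invariance is of course the same fact that makes $X$ Killing, but you use it in its raw coordinate form rather than through the geometry). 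Your parenthetical remark about the cotangent lift is the closest to the paper's spirit, though the paper works with $\nabla$ rather than with the lifted flow. What each approach buys: yours is the minimal verification; the paper's makes explicit that $H_0$ is the Noether charge of a Killing symmetry of $g$, which is the organizing idea of the section (the ``momentum'' is introduced precisely as the conserved quantity attached to the Killing field, and this geometric reading is reused later, e.g.\ in Lemmas~\ref{lemma1}--\ref{lemma3} where $g(\dot q,X)$ is estimated pointwise). Both arguments are complete and the identity $H_0=g(\dot q,X)$ is handled the same way in both.
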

\begin{proof}
Let $t\mapsto q(t)$ be a geodesic of $g$. The corresponding solution to \eqref{hameq} has the form $t\mapsto(q(t),p(t))$ with $p(t)=g(\dot q(t),.)$. Thus along the geodesic $H_0(q(t),p(t))=g(\dot q(t),X)$. The corollary follows from the fact that the scalar product $g(\dot q(t),X)$ is a conserved quantity along any geodesics for any Killing vector field $X$. Indeed, denoting by $\nabla$ the Levi-Civita connection of $g$,  we have
$$
\frac{d}{dt}g(\dot q,X)=g(\nabla_{\dot q}\dot q,X)+g(\dot q,\nabla_{\dot q}X)=g(\dot q,\nabla_X\dot q)-g(\dot q,[X,\dot q])=\frac{1}{2}Xg(\dot q,\dot q)-\frac{1}{2}Xg(\dot q,\dot q)=0.
$$
where we used \eqref{killing} and the metricity and torsion-freeness of $\nabla$.
\end{proof}
Leter on we shall construct a sequence $(H_i)_{i=0,\ldots,n-1}$ of first integrals. The Hamiltonian function $H$ will appear in the sequence as $H_1$, i.e.
$$
H_1(q,p)=\frac{1}{2}g(p,p)=\frac{1}{2}\sum_{i,j=1}^np_ip_je^{-|q_i-q_j|}.
$$
Note that
$$
H_1(q,p)=\frac{1}{2}h(p,p),
$$
where the bi-linear form $h$ (the inverse of $g$) is well defined for all $q$ even if $q_i=q_j$. We shall refer to $H_0$ and $H_1$ as the momentum and the energy, respectively.

\paragraph{Bi-Hamiltonian system.} 
Let $P$ be a bi-vector on $m$-dimensional manifold $M$. By definition it is a skew-symmetric $(2,0)$-tensor. In local coordinates $(x_1,\ldots,x_m)$ it is an expression of the form
$$
P=\sum_{i,j=1}^mP^{ij}\partial_{x_i}\wedge\partial_{x_j}
$$
where $\wedge$ is the exterior tensor product. A bi-vector defines a bracket on the set of $C^\infty$ functions on $M$ by the formula
$$
\{\phi,\psi\}_P=P(d\phi,d\psi)=\frac{1}{2}\sum_{i,j=1}^mP^{ij}((\partial_{x_i}\phi)(\partial_{x_j}\psi)-(\partial_{x_i}\psi)(\partial_{x_j}\phi)),
$$
where $d\phi$ and $d\psi$ are exterior differentials of functions $\phi$ and $\psi$. $P$ is called a Poisson structure if $\{.,.\}_P$ satisfies the Jacobi identity.

Note that any bi-vector can be considered as a mapping $\Omega^1(M)\to \Vect(M)$ that sends 1-forms to vector fields on $M$. Indeed, for any 1-form $\alpha=\sum_{i=1}^m\alpha_idx_i$
$$
\vec \alpha_P=P(\alpha,.)=\frac{1}{2}\sum_{i,j=1}^mP^{ij}(\alpha_i\partial_{x_j}-\alpha_j\partial_{x_i})
$$
is a vector field on $M$. In particular, for a given function $H$ a Poisson structure defines the Hamiltonian vector field by formula
$$
\vec H_P=P(dH,.).
$$
If $m=2n$ is even and $P$ is non-degenerate, meaning that $P$ is invertible as a map $\Omega^1(M)\to \Vect(M)$, then one can choose local coordinates on $M$ such that $P=\sum_{i=1}^n\partial_{p_i}\wedge\partial_{q_i}$. Then the integral curves of $\vec H_P$ satisfy the standard Hamiltonian equation \eqref{hameq}.

A bi-Hamiltonian structure on $M$ is a pair of compatible Poisson structures $(P_0,P_1)$, in a sense that any linear combination $\lambda_0 P_0+\lambda_1 P_1$ is a Poisson structure too. We shall assume that $P_1$ is non-degenerate.

The bi-Hamiltonian approach to the integrability goes back to \cite{M} and \cite{DG} and relies on the assumption that there are given two functions $H_0$ and $H_1$ such that
$$
P_0(dH_0,.)=P_1(dH_1,.)
$$
i.e. $\vec{H_0}_{P_0}=\vec{H_1}_{P_1}$. Then we say that $\dot x =\vec{H_0}_{P_0}(x)=\vec{H_1}_{P_1}(x)$ is a bi-Hamiltonian system. The bi-Hamiltonian systems are integrable. Namely, one considers $P_0$ and $P_1$ as mappings $\Omega^1(M)\to \Vect(M)$ and takes the composition
$$
S=P_1^{-1}\circ P_0\colon \Omega^1(M)\to \Omega^1(M)
$$
called the recursive operator. Then, for any $i\in\N$, the one-form
\begin{equation}\label{firstint}
S^i(dH_1)
\end{equation}
is closed and equals $dH_{i+1}$ for some function $H_{i+1}$ \cite[Theorem 3.4]{DG} which is proved to be a first integral of $\vec{H_0}_{P_0}$ (and consequently of $\vec{H_1}_{P_1}$). Moreover, the functions $H_i$ are in involution. 

System \eqref{hameq} for the Camassa-Holm equation has the following bi-Hamiltonian formulation.
\begin{theorem}\label{thmbiham}
The pair
$$
\begin{aligned}
P_0&=\sum_{i,j=1}^np_ie^{-|q_i-q_j|}\partial_{p_i}\wedge\partial_{q_j} -\sum_{i<j}\sgn(q_i-q_j)p_ip_je^{-|q_i-q_j|}\partial_{p_i}\wedge\partial_{p_j}\\
&+\sum_{i<j}\sgn(q_i-q_j)(e^{-|q_i-q_j|}-1)\partial_{q_i}\wedge\partial_{q_j}\\
P_1&=\sum_{i=1}^n\partial_{p_i}\wedge\partial_{q_i}
\end{aligned}
$$
is a bi-Hamiltonian structure such that $P_0(dH_0,.)=P_1(dH_1,.)$ where $H_0$ is the momentum and $H_1$ is the energy for the $n$-peakon solutions of the Camassa-Holm equation.
\end{theorem}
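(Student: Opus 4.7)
My plan splits the theorem into three independent checks: the matching identity $P_0(dH_0,\cdot)=P_1(dH_1,\cdot)$, the Jacobi identity for $P_0$ (that for $P_1$ is trivial), and compatibility of the pair. For the matching identity, since $dH_0=\sum_k dp_k$, I would contract $P_0$ with $dH_0$ summand by summand. The $\partial_{p_i}\wedge\partial_{q_j}$ piece contributes the $\partial_{q_j}$-coefficient $\sum_k p_k e^{-|q_k-q_j|}$, which is exactly $\partial_{p_j}H_1$. The $\partial_{p_i}\wedge\partial_{p_j}$ piece contributes at $\partial_{p_m}$ the expression $\sum_{k\neq m}p_m p_k\sgn(q_m-q_k)e^{-|q_m-q_k|}$; using $\partial_{q_m}|q_i-q_j|=(\delta_{im}-\delta_{jm})\sgn(q_i-q_j)$ one checks this equals $-\partial_{q_m}H_1$. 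The $\partial_{q_i}\wedge\partial_{q_j}$ summand vanishes under contraction with any $dp_k$. Together these yield $P_0(dH_0,\cdot)=P_1(dH_1,\cdot)$.

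For the Jacobi identity for $P_0$ I would first read off the coordinate brackets
\[
\{q_i,q_j\}_0=\sgn(q_i-q_j)(e^{-|q_i-q_j|}-1),\qquad \{p_i,q_j\}_0=p_i e^{-|q_i-q_j|},
\]
\[
\{p_i,p_j\}_0=-\sgn(q_i-q_j)p_ip_j e^{-|q_i-q_j|},
\]
and then verify $\{\{A,B\}_0,C\}_0+\mathrm{cyc}=0$ on all coordinate triples. On each chamber of $\{q_i\neq q_j\text{ for all }i\neq j\}$ the sign functions are constant and $P_0$ is $S_n$-equivariant under simultaneous relabelling of the pairs $(q_i,p_i)$, so it suffices to work inside the chamber $\{q_1<\cdots<q_n\}$. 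There one has in addition $e^{-|q_i-q_j|}e^{-|q_j-q_k|}=e^{-|q_i-q_k|}$ whenever $q_j$ lies between $q_i$ and $q_k$, and the four families $(q,q,q)$, $(p,q,q)$, $(p,p,q)$, $(p,p,p)$ each reduce to telescoping exponential identities that close by direct computation.

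Compatibility reduces to the vanishing of the Schouten--Nijenhuis bracket $[P_0,P_1]_{SN}$. Because $P_1$ has constant coefficients in the Darboux chart $(q,p)$, this bracket collapses to a cyclic sum of $q_a$- and $p_a$-derivatives of the coefficients of $P_0$; every derivative involved has already been controlled in the Jacobi step, and the required cyclic cancellation is again a chamber-wise exponential identity. The main obstacle is the bookkeeping in the $(p,p,p)$ Jacobi triple, where three sign functions and three exponentials interact; restricting to a single ordering chamber turns the signs into constants, and the product rule for exponentials telescopes the resulting triple sums, reducing the identity to a handful of elementary subcases.
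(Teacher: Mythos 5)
Your proposal follows essentially the same route as the paper: a direct contraction check that $P_0(dH_0,\cdot)=P_1(dH_1,\cdot)$, followed by case-by-case verification that $[P_0,P_0]_{SN}$, $[P_1,P_1]_{SN}$ and $[P_0,P_1]_{SN}$ vanish, carried out in a fixed ordering chamber where the sign factors are constant (your $S_n$-equivariance reduction and the telescoping identity $e^{-|q_i-q_j|}e^{-|q_j-q_k|}=e^{-|q_i-q_k|}$ are precisely the mechanism behind the one representative cancellation the paper writes out). Like the paper, you leave the remaining coefficients of $[P_0,P_0]_{SN}$ and of $[P_0,P_1]_{SN}$ as routine computations, so the level of detail is comparable and no new idea is introduced or missing.
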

\begin{proof}
One immediately checks that
$$
P_0(dH_0,.)=P_1(dH_1,.)=\sum_{i,j=1}^mp_ie^{-|q_i-q_j|}\partial_{q_j}-\sum_{i<j}\sgn(q_i-q_j) p_ip_je^{-|q_i-q_j|}(\partial_{p_j}-\partial_{p_i}).
$$
Thus it is enough to prove that $P_0$ and $P_1$ are compatible Poisson brackets. To do this it is enough to verify that the Schouten-Nijenhuis brackets $[P_0,P_0]_{SN}$, $[P_1,P_1]_{SN}$ and $[P_0,P_1]_{SN}$ vanish. The Schouten-Nijenhuis bracket of two bi-vectors $P$ and $R$, defined in a coordinate system $(x_1,\ldots,x_m)$ by skew-symmetric coefficients $P^{ij}$ and $R^{ij}$, respectively, is given by the formula
$$
[P,R]_{SN}=\sum_{i,j,k,l=1}^m(P^{ij}\partial_{x_i}R^{kl}+R^{ij}\partial_{x_i}P^{kl})
\partial_{x_j}\wedge\partial_{x_k}\wedge\partial_{x_l}.
$$
Hence, $[P,R]_{SN}=0$ if and only if
$$
\sum_{i=1}^n(P^{ij}\partial_{x_i}R^{kl}+P^{ik}\partial_{x_i}R^{lj}+P^{il}\partial_{x_i}R^{jk} +R^{ij}\partial_{x_i}P^{kl}+R^{ik}\partial_{x_i}P^{lj}+R^{il}\partial_{x_i}P^{jk})=0
$$
for all $j<k<l$. In the setting of the theorem clearly $[P_1,P_1]_{SN}=0$ holds. It is also easy to prove that $[P_0,P_1]_{SN}=0$ because the coefficients of $P_1$ are constant and one considers the cyclic sums of derivatives of the coefficients of $P_0$. The most complicated identity $[P_0,P_0]_{SN}=0$ can be verified by direct computations. For instance, considering the coefficient of $[P_0,P_0]_{SN}$ next to $\partial_{q_j}\wedge\partial_{q_k}\wedge\partial_{q_l}$ with $q_j>q_k>q_l$ one gets
$$
\begin{aligned}
&(e^{-q_j+q_k}-1)e^{-q_k+q_l}-(e^{-q_j+q_l}-1)e^{-q_k+q_l}+
(e^{-q_k+q_l}-1)e^{-q_j+q_l}+(e^{-q_j+q_k}-1)e^{-q_j+q_l}+\\
&-(e^{-q_j+q_l}-1)e^{-q_j+q_k}+(e^{-q_k+q_l}-1)e^{-q_j+q_k}=\\
&e^{-q_j+q_l}-e^{-q_k+q_l}-e^{-q_j-q_k+2q_l}+e^{-q_k+q_l}+
e^{-q_j-q_k+2q_l}-e^{-q_j+q_l}+e^{-2q_j+q_k+q_l}-e^{-q_j+q_l}+\\
&-e^{-2q_j+q_k+q_l}+e^{-q_j+q_k}+e^{-q_j+q_l}-e^{-q_j+q_k}=0.
\end{aligned}
$$
The other coefficients can be computed similarly. We omit the calculations here.
\end{proof}

\begin{corollary}
The first integrals $H_i$ defined by the recursive operator via \eqref{firstint} for the bi-Hamiltonian system of Theorem \ref{thmbiham} are polynomial functions of $p$. Precisely $H_i$ is a polynomial of degree $i+1$.
\end{corollary}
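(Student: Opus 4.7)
The plan is to prove the claim by induction on $i$, using the recursion $\d H_{i+1}=S(\d H_i)$ with $S=P_1^{-1}\circ P_0$. The base cases are known: $H_0=\sum_j p_j$ has degree $1$ in $p$ and $H_1=\frac{1}{2}\sum_{i,j}p_ip_je^{-|q_i-q_j|}$ has degree $2$. The inductive hypothesis is that $H_k$ is polynomial in $p$ of degree $k+1$, so that the $\d p_i$-components $b_i=\partial H_k/\partial p_i$ of $\d H_k$ are polynomial of degree $k$ and the $\d q_i$-components $a_i=\partial H_k/\partial q_i$ are polynomial of degree $k+1$.

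The first step is to write out $S$ explicitly. For an arbitrary $1$-form $\alpha=\sum_i(a_i\,\d q_i+b_i\,\d p_i)$ I would read $P_0(\alpha,\cdot)$ off term by term from the three pieces of $P_0$; since $P_1^{-1}$ simply sends $\partial_{q_i}\mapsto \d p_i$ and $\partial_{p_i}\mapsto-\d q_i$, a direct computation yields $S(\alpha)=\sum_i(a'_i\,\d q_i+b'_i\,\d p_i)$ with
\begin{align*}
b'_i&=\sum_k p_k e^{-|q_i-q_k|}b_k-\sum_j \sgn(q_i-q_j)(e^{-|q_i-q_j|}-1)a_j,\\
a'_i&=\sum_j p_i e^{-|q_i-q_j|}a_j-\sum_{j\neq i}\sgn(q_i-q_j)p_ip_je^{-|q_i-q_j|}b_j.
\end{align*}

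The crucial observation is the $p$-grading of the coefficients of $P_0$: the mixed $\partial_p\wedge\partial_q$ coefficients are linear in $p$, the $\partial_p\wedge\partial_p$ coefficients are quadratic, while the pure $\partial_q\wedge\partial_q$ coefficients are $p$-free. Inspecting the formulas above, it is then immediate that $b'_i$ is polynomial in $p$ of degree $k+1$ and $a'_i$ of degree $k+2$, which are exactly the degrees expected of $\partial H_{k+1}/\partial p_i$ and $\partial H_{k+1}/\partial q_i$. By \cite[Theorem 3.4]{DG} the $1$-form $S(\d H_k)=\d H_{k+1}$ is closed, so integrating it (for instance along a line in $p$ first and then in $q$) recovers $H_{k+1}$ as a polynomial in $p$ of degree $k+2$. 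Because each term of $a'_i$ carries an explicit factor of $p_i$, the $p$-independent part of $\d H_{k+1}$ vanishes, so no non-polynomial $q$-only contribution can sneak in; only an irrelevant global constant is undetermined, and the induction closes.

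The whole argument is essentially bookkeeping of polynomial degrees, and the main labour is the verification of the explicit formula for $S$ above. I do not expect any substantial obstacle beyond that routine computation.
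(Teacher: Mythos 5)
Your argument is correct and follows essentially the same route as the paper: an induction driven by the $p$-degrees of the three blocks of $P_0$ (degree $2$, $1$, $0$) together with the fact that $P_1^{-1}$ merely swaps $\partial_{q_i}\mapsto \d p_i$, $\partial_{p_i}\mapsto-\d q_i$. You are somewhat more explicit than the paper (writing $S$ in components and justifying via closedness that integration returns a polynomial with no stray $q$-dependent constant), but the underlying idea is identical.
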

\begin{proof}
Note that $P_0$ is polynomial in $p$ (of degree 2 next to $\partial_{p_i}\wedge\partial_{p_j}$, of degree 1 next to $\partial_{p_i}\wedge\partial_{q_j}$ and of degree 0 next to $\partial_{q_i}\wedge\partial_{q_j}$). Additionally $P^{-1}_1(\partial_{q_i})=dp_i$  and $P^{-1}_1(\partial_{p_i})=-dq_i$. Thus, applying $S=P^{-1}_1\circ P_0$ to a one-form polynomial of degree $k$ in $p$ one gets  a one-form which is polynomial of degree $k+1$ in $p$. The corollary follows by induction from the fact that $H_0$ is linear in $p$.
\end{proof}

We get that the Hamiltonian system \eqref{hameq} is integrable. Below, in Theorem \ref{thm_first_int}, we shall present formulae for the first integrals. However, we shall introduce some notation first. Namely, let $\mathcal{I}_s^n$ be the set of all multi-indices $I=(i_0,\ldots,i_s)$ such that $i_j\in\{1,\ldots,n\}$, for any $j=0,\ldots,s$, and satisfying $i_0\leq i_1\leq\ldots\leq i_s$.  For a given multi-index $I=(i_0,\ldots,i_s)\in\mathcal{I}_s^n$  let $t$ be the number of different values taken by entries of $I$. We define numbers $s^I_0,s^I_1,\ldots,s^I_t$ as multiplicities of the values, i.e. we have
$$
i_0=i_1=\cdots=i_{s^I_0-1}<i_{s^I_0}=\cdots=i_{s^I_0+s^I_1-1}<\cdots< i_{s^I_0+\cdots+s^I_{t-1}}=\cdots=i_{s^I_0+\cdots+s^I_t-1}
$$
and $s^I_0+\cdots+s^I_t=s+1$. We will write
$$
I!=s_0^I!s_1^I!\cdots s_t^I!.
$$
Note that $\frac{(s+1)!}{I!}$ is the number of different sequences that are permutations of the sequence $(i_0,\ldots,i_s)$.

For a given multi-index $I=(i_0,\ldots,i_s)$ we shall write
$$
p_I=p_{i_0}p_{i_1}\cdots p_{i_s}.
$$
Further, let $\mathcal{P}_s$ be the set of all mappings $\rho\colon\{1,\ldots,s\}\to\{0,\ldots,s-1\}$ such that
$$
\rho(j)<j.
$$
For a given $\rho\in\mathcal{P}_s$ and $j\in\{0,\ldots,s-1\}$ we shall denote by $|\rho^{-1}(j)|$ the size of the preimage of $j$, i.e. the number of different $i\in\{1,\ldots,s\}$ such that $\rho(i)=j$. Then
$$
s_\rho=|\{j\ :\ |\rho^{-1}(j)|=1\}|
$$
is the size of the maximal subset $A\subset\{1,\ldots,s\}$ such that $\rho|_A$ is invertible.

\begin{theorem}\label{thm_first_int}
The first integrals of \eqref{hameq} constructed via \eqref{firstint} are given in the region $q_1>\ldots>q_n$ by the following formula
$$
H_s(q,p)=\sum_{I\in\mathcal{I}_s^n}\frac{1}{I!}p_I\left(\sum_{\rho\in\mathcal{P}_s}c_\rho e^{-\sum_{j=0}^s(q_{i_j}-q_{i_{\rho(j)}})}\right)
$$
where
$$
c_\rho=
\left\{
	\begin{array}{ll}
		0  & \mbox{if\ \ } |\rho^{-1}(0)|>1 \mbox{\ \  or\ \ } |\rho^{-1}(j)|>2 \mbox{\ \ for some\ \ } j\in\{1,\ldots,s-1\} \\
		2^{s_\rho} & \mbox{otherwise.}
	\end{array}
\right.
$$
Moreover
\begin{equation}\label{suma}
\sum_{\rho\in\mathcal{P}_s}c_\rho=s!.
\end{equation}
\end{theorem}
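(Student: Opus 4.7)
The plan is induction on $s$ using the recursion $dH_{s+1} = S(dH_s) = P_1^{-1}(P_0(dH_s, \cdot))$ from \eqref{firstint} and Theorem \ref{thmbiham}. In the base case $s = 0$ the set $\mathcal{P}_0$ reduces to the empty map (with the convention $c_\emptyset = 1$ and empty exponent), and the formula collapses to $H_0 = \sum_i p_i$, i.e.\ the momentum of Proposition \ref{prop2}. The case $s = 1$ is then a sanity check reproducing $H_1 = \tfrac12\sum p_ip_j e^{-|q_i-q_j|}$ and fixing the overall normalization.

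For the inductive step, restrict to the chamber $q_1 > \cdots > q_n$ so that $\sgn(q_i - q_j) = 1$ for $i < j$ and the sign factors drop out of $P_0$. Starting from the induction hypothesis, one computes $\partial_{p_k}H_s$ and $\partial_{q_k}H_s$ from the claimed formula, applies $P_0$ to assemble $P_0(dH_s, \cdot)$, and uses $P_1^{-1}(\partial_{q_i}) = dp_i$, $P_1^{-1}(\partial_{p_i}) = -dq_i$ to return to a one-form, which is then matched against the differential of the claimed $H_{s+1}$. The combinatorial content is the bijection $\mathcal{P}_{s+1} \leftrightarrow \{(\rho,k) : \rho\in\mathcal{P}_s,\ k\in\{0,\ldots,s\}\}$: each application of $S$ extends the multi-index $I$ by one new entry $i_{s+1}$ and fixes a new value $\rho'(s+1) = k$.

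The main obstacle will be correctly accounting for the weight $c_\rho = 2^{s_\rho}$ and for the vanishing of $c_\rho$ when $|\rho^{-1}(0)| > 1$ or $|\rho^{-1}(j)| > 2$. These should emerge from the three distinct pieces of $P_0$: the $\partial_{p_i}\wedge\partial_{q_j}$ term attaches the new index to an existing one via a fresh exponential $e^{-|q_i-q_j|}$ and is responsible for the doubling that builds up $2^{s_\rho}$; the $\partial_{p_i}\wedge\partial_{p_j}$ term creates configurations in which the new $\rho$-value coincides with an earlier assignment (hence the cut-off at preimage size $2$, and the stricter condition at $0$ reflecting the absence of a formal "$\rho(0)$"); and the $\partial_{q_i}\wedge\partial_{q_j}$ term, via the offset $-1$ in $e^{-|q_i-q_j|}-1$, must be shown to contribute only terms already present in the expected expression, with all unwanted constants cancelling. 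The symmetrization factors $1/I!$ coming from permutations of equal entries of $I$ must likewise be compatibly reshuffled when the new entry $i_{s+1}$ merges with existing ones.

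Identity \eqref{suma} is then most cleanly handled separately by a direct combinatorial induction on $\mathcal{P}_s$: classifying $\rho\in\mathcal{P}_s$ by the value $\rho(s)$ and by whether that value is already in the image of $\rho|_{\{1,\ldots,s-1\}}$ yields a recursion of the form $\sum_{\rho\in\mathcal{P}_s} c_\rho = s\cdot \sum_{\rho\in\mathcal{P}_{s-1}} c_\rho$, which unwinds to $s!$. Alternatively, \eqref{suma} can be extracted as a by-product of the main induction by comparing the top-degree coefficient of $H_s$ in $p$ with an independent evaluation in a limit where the $q_i$ collapse and all exponentials reduce to $1$.
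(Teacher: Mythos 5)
Your strategy coincides with the paper's: induction on $s$ via the recursion operator $S=P_1^{-1}\circ P_0$, with the identity \eqref{suma} handled by a separate induction that classifies extensions of $\rho$ by where the new argument is sent. However, the proposal stops exactly where the proof has to be done: the passage from $c_\rho$ to $c_{\hat\rho}$ with the correct values $0$, $c_\rho$, $2c_\rho$ is announced as ``the main obstacle'' rather than carried out, and the mechanism you sketch for it is not the one that actually works. You propose to match the full one-form $S(dH_s)$ against the differential of the claimed $H_{s+1}$, which forces you to confront the $\sum\sgn(q_i-q_j)(e^{-|q_i-q_j|}-1)\,\partial_{q_i}\wedge\partial_{q_j}$ part of $P_0$ and to show that its constant offsets cancel. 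The paper avoids this entirely: since $S(dH_s)$ is closed, it suffices to integrate only the $dq_k$-components with $k\geq i_j$ for all entries $i_j$ of the multi-index, and the $\partial_q\wedge\partial_q$ term of $P_0$ contributes only to $dp$-components (because $P_1^{-1}(\partial_{q_i})=dp_i$), so it never enters. Moreover, the doubling $2^{s_\rho}$ and the vanishing at preimage size $2$ do not come from different pieces of $P_0$ as you suggest; they come from the interference of the two contributions to one and the same $dq_k$-coefficient --- one produced by $S$ acting on the $dp$-part of $dH_s$ (carrying a factor $\sgn(q_k-q_{i_j})$) and one produced by $S$ acting on the $dq$-part (carrying a factor $\sgn(\rho,j)$ equal to $-1$, $0$ or $1$ according to $|\rho^{-1}(j)|$) --- which cancel, add, or leave a single term. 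Without this pairing the claimed values of $c_{\hat\rho}$ do not emerge, so as written the inductive step is a gap, not a proof.

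A smaller but real omission concerns \eqref{suma}: classifying $\rho\in\mathcal{P}_{s+1}$ by $\rho(s+1)$ and by whether that value is already hit is not enough, because a value hit once contributes $c_{\tilde\rho}$ while a fresh value contributes $2c_{\tilde\rho}$ and a value hit twice contributes $0$. The factor $s+1$ only comes out because, for $\tilde\rho$ with $c_{\tilde\rho}\neq 0$, the number of $j\in\{1,\ldots,s-1\}$ with empty preimage equals the number with preimage of size two (a counting consequence of $|\tilde\rho^{-1}(0)|=1$ and $\sum_j|\tilde\rho^{-1}(j)|=s$). This balancing identity is the crux of the paper's induction for \eqref{suma} and is absent from your sketch. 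Your fallback --- deriving \eqref{suma} from an independent computation of the coefficient of $p_i^{s+1}$ --- is viable and is essentially how the paper cross-checks that coefficient, but it too requires the $dp_i$-component computation you have not performed.
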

\begin{proof}
Firstly we shall prove \eqref{suma} by induction. If $s=1$ then $\mathcal{P}_s$ consists of one mapping: $\rho(1)=0$ with $s_\rho=1$. Thus \eqref{suma} holds in this case.  To prove the formula for $s+1$ we take $\rho\in\mathcal{P}_{s+1}$ and consider its restriction to the set $\{1,\ldots,s\}$, denoted $\tilde\rho$. Hence $\tilde \rho\in\mathcal{P}_s$. Assume that $c_{\tilde\rho}\neq 0$ (otherwise $c_\rho=0$ too, and consequently $\rho$ does not contribute to \eqref{suma}). The crucial observation is that there is exactly the same number of $j\in\{1,\ldots,s-1\}$ such that $|\tilde \rho^{-1}(j)|=0$ and such that $|\tilde \rho^{-1}(j)|=2$. Denote $A_0=\{j\in\{1,\ldots,s-1\}\ :\ |\tilde \rho^{-1}(j)|=0\}$, $A_1=\{j\in\{1,\ldots,s-1\}\ :\ |\tilde \rho^{-1}(j)|=1\}$ and $A_2=\{j\in\{1,\ldots,s-1\}\ :\ |\tilde \rho^{-1}(j)|=2\}$. Now, if $\rho(s+1)\in A_0$ then $c_\rho=2c_{\tilde\rho}$, if $\rho(s+1)\in A_1$ then $c_\rho=c_{\tilde\rho}$ and if $\rho(s+1)\in A_2$ then $c_\rho=0$. Moreover, if $\rho(s+1)=0$ then $c_\rho=0$ and if $\rho(s+1)=s$ then $c_\rho=2$. Hence, since $|A_0|=|A_2|$ and the sets $A_0$, $A_1$ and $A_2$ together with $\{0,s\}$ cover all $\{0,\ldots,s\}$, we get that for any fixed $\tilde\rho\in\mathcal{P}_s$ the following holds
$$
\sum_{\substack{\rho\in\mathcal{P}_{s+1},\\ \rho|_{\{1,\ldots,s\}}=\tilde\rho}}c_\rho=(s+1)c_{\tilde\rho},
$$
and this implies \eqref{suma} for $s+1$.

Now, we proceed to the proof of the formula for $H_s$. We shall proceed by induction again. For $s=0$ the formula is correct. In order to find $H_{s+1}$ we apply the recursive operator $S$ to $dH_s$. We already know that $H_{s+1}$ is a homogeneous polynomial of order $s+2$ in $p_i$'s. Our goal is to compute the coefficients of the polynomial which are exponential functions of $q_i$'s.  Since the system is bi-Hamiltonian we know that $S(dH_s)$ is a closed one-form. Therefore, in order to integrate it is enough to consider the terms involving $dq_i$'s and $dp_i$'s separately (integration of both terms coincide).

We start with  the coefficient of $H_{s+1}$ next to $p_i^{s+2}$, for some fixed $i$. According to \eqref{suma} it should equal $\frac{1}{s+2}$ because $I!=(s+2)!$ in this case. We verify this directly by looking for the coefficient of $S(dH_s)$ next to $p_i^{s+1}dp_i$, as it is the only possibility to get a term involving $p_i^{s+2}$ in  $H_{s+1}$. One sees that, under the inductive hypothesis, it equals $(s+1)\frac{1}{(s+1)}=1$. Therefore, after integration, we get $\frac{1}{s+2}$ next to $p_i^{s+2}$ in $H_{s+1}$.

In order to compute other coefficients it will be more convenient to look for the terms next to $dq_i$'s in $S(dH_s)$. We have
\begin{equation}\label{dhs}
\begin{aligned}
S(dH_s)=&\sum_{I\in\mathcal{I}_s^n}\frac{1}{I!}\left(\sum_{\rho\in\mathcal{P}_s}\sum_{j=0}^s\sum_{k=1}^n c_\rho e^{-|q_{i_j}-q_k|-\sum_{l=0}^s(q_{i_l}-q_{i_{\rho(l)}})}\sgn(q_k-q_{i_j})p_Ip_k dq_k\right)\\
&+\sum_{I\in\mathcal{I}_s^n}\frac{1}{I!}\left(\sum_{\rho\in\mathcal{P}_s}\sum_{j=0}^s\sum_{k=1}^n c_\rho e^{-|q_{i_j}-q_k|-\sum_{l=0}^s(q_{i_l}-q_{i_{\rho(l)}})}\sgn(\rho,j)p_Ip_k dq_k\right) \mod dp.
\end{aligned}
\end{equation}
where
$$
\sgn(\rho,j)=
\left\{
	\begin{array}{ll}
		-1  & \mbox{if\ \ } j=0 \mbox{\ \  or\ \ } |\rho^{-1}(j)|=2\\
		1 & \mbox{if\ \ } j=s \mbox{\ \ or\ \ } |\rho^{-1}(j)|=0\\
		0 & \mbox{otherwise.}
	\end{array}
\right.
$$
The first sum in \eqref{dhs} comes from $S$ applied to the part of $dH_s$ involving $dp_i$'s and the second sum in \eqref{dhs} comes from $S$ applied to the part of $dH_s$ involving $dq_i$'s. Let us notice that it is enough to consider only those values of $k$ in the two sums above for which $k\geq i_j$ for all $j=0,\ldots,s$, where $I=(i_0,\ldots,i_s)\in\mathcal{I}_s^n$ is a multi-index. Indeed, in a process of integration of these particular components we will get all coefficients in $H_{s+1}$ that we are looking for, and we \emph{a priori} know that integration of the remaining parts is consistent with the integration of these particular coefficients because, again, $S(dH_s)$ is a closed one-form.

So, let us fix $I\in\mathcal{I}_s^n$, $k\in\{1,\ldots,n\}$ such that $k\geq i_j$ for all $j=0,\ldots,s$, as well as $\rho\in\mathcal{P}_s$ and $j\in\{0,\ldots,s\}$. We extend $I$ to a multi-index $\hat I\in\mathcal{I}_{s+1}^n$ by setting $i_{s+1}=k$ and similarly we extend $\rho$ to a mapping $\hat \rho\in\mathcal{P}_{s+1}$ such that $\hat\rho(s+1)=j$. Our aim is to find the coefficient $c_{\hat\rho}$ in $H_{s+1}$. Note that the components of the first and the second sum in \eqref{dhs} corresponding to $I$, $\rho$, $j$ and $k$ cancel out or add, depending on $\sgn(\rho, j)$. Consequently we get $c_{\hat \rho}$ which is either $0$, $c_\rho$ or $2c_\rho$ exactly as required. Moreover, if $k=i_s$ then when integrating we get additional factor $\frac{1}{s^I_t+1}$ which is the inverse of the multiplicity of $q_k$ in the component under consideration. This factor is incorporated to $\frac{1}{\hat I!}$ in the formula for $H_{s+1}$. This completes the proof.
\end{proof}

Assume that we are in the region where $q_i>q_j$ if $i<j$. Then, in the 3-dimensional case Theorem \ref{thm_first_int} gives
$$
\begin{aligned}
H_2(q,p)=& \frac{1}{3}\left(p_1^3+p_2^3+p_3^3+3e^{-(q_1-q_2)}(p_1^2p_2+p_1p_2^2)
+3e^{-(q_2-q_3)}(p_2^2p_3+p_2p_3^2)\right.\\
&\left.+3e^{-(q_1-q_3)}(p_1^2p_3+p_1p_3^2)+6e^{-(q_1-q_3)}p_1p_2p_3\right).
\end{aligned}
$$
In dimension 4 we get
$$
\begin{aligned}
H_3(q,p)=&\frac{1}{4}\left(p_1^2+p_2^2+p_3^2+p_4^2+ \sum_{1\leq i<j\leq4}e^{-(q_i-q_j)}(4p_i^3p_j+6p_i^2p_j^2+4p_ip_j^3)\right.\\
&+\sum_{1\leq i<j<k\leq4}\left((8e^{-(q_i-q_k)}+4e^{-(2q_i-q_j-q_k)})p_i^2p_jp_k+(8e^{-(q_i-q_k)}+4e^{-(q_i+q_j-2q_k)})p_ip_jp_k^2\right)\\
&\left.+\sum_{1\leq i<j<k\leq4}12e^{-(q_i-q_k)}p_ip_j^2p_k + (16e^{-(q_1-q_4)}+8e^{-(q_1+q_2-q_3-q_4)})p_1p_2p_3p_4\right).
\end{aligned}
$$
It is illuminating to see how the first integrals behave on the singular set. For instance $H_2=H_0^3$ if $q_1=q_2=q_3$. A similar formula holds for all $H_i$ in all dimensions.

In what follows, for the sake of convenience, we will often consider first integrals $H_i$ defined in Theorem \ref{thm_first_int} rescaled by $i+1$. In particular, in the following section we shall have
$$
H_1(q,p)=g(p,p).
$$

\section{Dissipative prolongation of 2-peakons}
Assume that $n=2$ and consider a geodesic $t\mapsto q(t)$, $t\geq 0$, of the metric $g$ corresponding to a 2-peakon solution to the Camassa-Holm equation. Without loss of generality we assume that
$$
\norm{\dot q(t)}=1,
$$
which means that $H_1=1$ along the geodesic. Moreover, without loss of generality, we consider the region of $\R^2$ where $q_1>q_2$. Then
$$
g=\frac{1}{1-e^{-2s}}\left(
\begin{array}{c c}
1 & -e^{-s}\\
-e^{-s} & 1\\
\end{array}
\right)
$$
where we denote
$$
s=q_1-q_2.
$$

\begin{lemma}\label{lemma1}
Let $q=(q_1,q_2)\in\R^2$. Then
$$
\max_p\{|H_0(q,p)|\ |\ H_1(q,p)=1\}=\frac{\sqrt{2}}{\sqrt{1+e^{-s}}}.
$$
\end{lemma}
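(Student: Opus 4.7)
The plan is to recognise this as a standard Cauchy--Schwarz problem on the Riemannian manifold $(\R^2, g)$. Proposition~\ref{prop2} already identifies $H_0(q,p) = p(X)$ as the pairing of the covector $p$ with the vector $X = \partial_{q_1} + \partial_{q_2}$, and, under the rescaling convention introduced at the end of Section~2, the constraint $H_1 = 1$ says exactly that $p$ has unit length in the dual metric $h = g^{-1}$. Cauchy--Schwarz for the natural covector--vector pairing therefore yields
$$
|H_0(q,p)|^2 \;=\; |p(X)|^2 \;\leq\; H_1(q,p)\cdot g(X,X) \;=\; g(X,X),
$$
reducing the whole question to computing $g(X,X)$ in the given coordinates.

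From the explicit matrix displayed for $g$ one finds immediately
$$
g(X,X) \;=\; \frac{1 - e^{-s} - e^{-s} + 1}{1 - e^{-2s}} \;=\; \frac{2(1-e^{-s})}{(1-e^{-s})(1+e^{-s})} \;=\; \frac{2}{1+e^{-s}},
$$
which already yields the upper bound $|H_0| \leq \sqrt{2}/\sqrt{1+e^{-s}}$. To see that this bound is attained I would invoke the equality case of Cauchy--Schwarz: the optimal $p$ must be proportional to the covector $g(X,\cdot)$, whose two components are $g_{11}+g_{12}$ and $g_{12}+g_{22}$, and these coincide by the $q_1 \leftrightarrow q_2$ symmetry of $g$. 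Hence any extremal $p$ satisfies $p_1 = p_2$; plugging this into $H_1(q,p) = 2p_1^2(1+e^{-s}) = 1$ gives $p_1 = \pm 1/\sqrt{2(1+e^{-s})}$, for which $|H_0| = 2|p_1|$ hits the claimed supremum.

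There is no real obstacle here: the whole argument is one application of Cauchy--Schwarz followed by a short trigonometric-style simplification. An equivalent route via Lagrange multipliers---setting $\nabla H_0 = \lambda\,\nabla H_1$ and quickly concluding $p_1 = p_2$ from the two resulting equations---is equally short and confirms both the value of the maximum and that it is achieved at an interior critical point rather than ``at infinity''.
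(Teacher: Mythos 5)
Your proof is correct and follows essentially the same route as the paper: both arguments reduce the problem to maximizing the pairing of $X$ with a unit (co)vector via Cauchy--Schwarz, so that the maximum equals $\norm{X}_g=\sqrt{g(X,X)}=\sqrt{2/(1+e^{-s})}$, attained when $p$ is dual to a multiple of $X$. If anything, your version is slightly more complete, since you carry out the computation of $g(X,X)$ and the verification of the equality case explicitly, both of which the paper leaves implicit.
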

\begin{proof}
Recall that $g=h^{-1}$. Let $V=h(p,.)$ be the vector dual to the co-vector $p$ with respect to the metric $g$. Then
$H_0(q,p)=g(X,V)$, where $X=(1,1)$ is the Killing vector field introduced in Proposition \ref{prop1}. Clearly $H_1(q,p)=1$ is equivalent to $g(V,V)=1$ and it follows that the maximum of $H_0$ is attained if $V$ is the unit vector proportional to $X$.
\end{proof}

\begin{lemma}\label{lemma2}
If for a given geodesic $t\mapsto q(t)$ 
$$
|H_0|=|p_1+p_2|>1
$$
holds or
$$
p_1-p_2>0\qquad \mathrm{at}\quad t=0,
$$
then it never approaches the singular set $q_1=q_2$ and consequently there is no collision for the corresponding 2-peakon solution to the Camassa-Holm equation.
\end{lemma}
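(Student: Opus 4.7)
The plan is to combine Lemma~\ref{lemma1} with the explicit first-order ODE for $s = q_1 - q_2$. First I would write out the Hamilton equations for the $2$-peakon Hamiltonian $H = \tfrac{1}{2}h(p,p)$ in the region $q_1 > q_2$. A direct computation gives $\dot q_1 = p_1 + p_2 e^{-s}$, $\dot q_2 = p_1 e^{-s} + p_2$, so
$$\dot s = (p_1-p_2)(1-e^{-s}),$$
and on $\{s>0\}$ the sign of $\dot s$ coincides with that of $p_1 - p_2$. Using $p_1^2+p_2^2 = \tfrac{1}{2}(H_0^2+(p_1-p_2)^2)$ and $2p_1 p_2 = \tfrac{1}{2}(H_0^2-(p_1-p_2)^2)$, the energy normalisation $h(p,p)=1$ translates into the identity
$$H_0^2(1+e^{-s}) + (p_1-p_2)^2(1-e^{-s}) = 2,$$
which is just Lemma~\ref{lemma1} rewritten in coordinates.

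The first hypothesis $|H_0|>1$ is immediate from this identity: non-negativity of the second summand forces $H_0^2(1+e^{-s}) \le 2$, hence $e^{-s} \le 2/H_0^2 - 1 < 1$, yielding the uniform lower bound $s(t) \ge -\log(2/H_0^2 - 1) > 0$ along the geodesic, so that no collision can occur.

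For the second hypothesis $p_1(0)-p_2(0) > 0$ the previous case covers $|H_0|>1$, so assume $|H_0| \le 1$. The identity then forbids $p_1 = p_2$ anywhere in $\{s>0\}$, because equality would require $H_0^2 = 2/(1+e^{-s}) > 1$. By continuity of the trajectory on the open set $\{s>0\}$, the sign of $p_1 - p_2$ cannot change and it remains strictly positive. Plugging back into the ODE, $\dot s > 0$ for all $t \ge 0$, so $s$ is strictly increasing from $s(0) > 0$ and the singular set is again never approached. The only point requiring any care is this sign-continuity step in Case~2, where Lemma~\ref{lemma1} is needed to exclude $p_1 = p_2$ from the regular region $\{s>0\}$ when $|H_0| \le 1$; the rest of the argument is a purely algebraic consequence of the two conservation laws.
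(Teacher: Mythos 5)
Your proof is correct and follows essentially the same route as the paper's: both parts rest on Lemma \ref{lemma1} (your identity $H_0^2(1+e^{-s})+(p_1-p_2)^2(1-e^{-s})=2$ is exactly that lemma in coordinates) together with the observation that $\dot s$ has the sign of $p_1-p_2$ on $\{s>0\}$. The only cosmetic difference is that you justify the persistence of the sign of $p_1-p_2$ via the explicit case split $|H_0|\le 1$, where the paper invokes the monotonicity of $s\mapsto H_0^{max}(s)$ and the vector field $X^\perp$; both arguments are sound.
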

\begin{proof}
Denote $H_0^{max}(s)=\frac{\sqrt{2}}{\sqrt{1+e^{-s}}}$. Then $H_0^{max}$ is an increasing function of $s$. Moreover
$$
\lim_{s\to 0}H_0^{max}=1.
$$
Thus, if $|H_0|>1$ then there exists $\epsilon>0$ such that $q_1(t)-q_2(t)>\epsilon$ for all $t>0$, because $H_0^{max}\geq |H_0|$ and $H_0$ is constant along geodesics.

Let $X^\perp=(1,-1)$. It is a vector field perpendicular with respect to $g$ to the Killing vector field $X$. Note that $g(\dot q,X^\perp)=p_1-p_2$ and if $g(\dot q(0),X^\perp)>0$  then the geodesic emerges from $q(0)$ in a direction that goes away from the singular set $q_1=q_2$, i.e. $\frac{ds}{dt}|_{t=0}>0$. Moreover, since $H_0^{max}$ is an increasing function of $s$ we get that  $\dot q(t)$ is never parallel to $X$, because $H_0^{max}$ is attained for vectors parallel to $X$. Thus $g(\dot q(t),X^\perp)>0$ for all $t$ and $q_1(t)-q_2(t)$ increases along the geodesic.
\end{proof}

Lemma \ref{lemma2} can be inverted. 
\begin{lemma}\label{lemma3}
If for a given geodesic $t\mapsto q(t)$
$$
|H_0|=|p_1+p_2|< 1
$$
holds and
$$
p_1-p_2<0 \qquad \mathrm{at}\quad t=0,
$$
then it approaches the singular set $q_1=q_2$ in a finite time. If $|H_0|=1$ and $p_1-p_2<0$ at $t=0$ then the geodesic approaches the singular set asymptotically as $t\to\infty$.
\end{lemma}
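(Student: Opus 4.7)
The plan is to reduce the geodesic equation on the $2$-dimensional manifold to a first-order ODE in the single variable $s=q_1-q_2$, exploiting the two conserved quantities $H_0$ and $H_1$. The starting point is the orthogonal decomposition of $\dot q$ in the basis $\{X,X^\perp\}$ (recall $g(X,X^\perp)=0$ can be checked directly from the formula for $g$). A short calculation gives
$$
g(X,X)=\frac{2}{1+e^{-s}},\qquad g(X^\perp,X^\perp)=\frac{2}{1-e^{-s}}.
$$
Writing $\dot q=\alpha X+\beta X^\perp$, the constraints $g(\dot q,X)=H_0$ (which equals $p_1+p_2$) and $g(\dot q,\dot q)=H_1=1$ let me solve for $\alpha$ and $\beta^2$ in closed form in terms of $s$ and $H_0$.

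Next I would note the two identities $\dot s=\dot q_1-\dot q_2=2\beta$ and $p_1-p_2=g(\dot q,X^\perp)=\beta\,g(X^\perp,X^\perp)$, so in particular $\dot s$ has the same sign as $p_1-p_2$. Combining with the expression for $\beta^2$ obtained above yields the autonomous ODE
$$
\dot s^2=\bigl(2-H_0^2(1+e^{-s})\bigr)(1-e^{-s}),
$$
valid as long as $s>0$. The hypothesis $p_1-p_2<0$ at $t=0$ forces $\dot s<0$ initially, and since the right-hand side is strictly positive on $(0,\infty)$ whenever $|H_0|\le 1$ (this is immediate: $1+e^{-s}<2$ for $s>0$), the geodesic cannot turn around before hitting $s=0$. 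Hence $s(t)$ is strictly decreasing, and the time needed to reach $s=0$ from $s_0=s(0)$ is
$$
T=\int_0^{s_0}\frac{ds}{\sqrt{(2-H_0^2(1+e^{-s}))(1-e^{-s})}}.
$$

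The only real work is the asymptotic analysis near $s=0$. In the case $|H_0|<1$, a Taylor expansion gives $(2-H_0^2(1+e^{-s}))(1-e^{-s})\sim (2-2H_0^2)\,s$ as $s\to 0^+$, so the integrand behaves like $s^{-1/2}/\sqrt{2-2H_0^2}$, which is integrable; hence $T<\infty$ and the singular set is reached in finite time. In the borderline case $|H_0|=1$, the right-hand side simplifies to $(1-e^{-s})^2$, so that $\dot s=-(1-e^{-s})$ and the integrand becomes $1/(1-e^{-s})\sim 1/s$ near $0$, which diverges; thus $T=\infty$ and $s(t)\to 0$ only asymptotically. I do not expect any genuine obstacle here; the main point to get right is the sign bookkeeping in the $\alpha,\beta$ decomposition and the correct derivation of the ODE, after which both regimes follow from a one-dimensional integrability analysis.
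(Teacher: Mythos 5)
Your proposal is correct and follows essentially the same route as the paper: both reduce the problem, via the conserved quantities $H_0$ and $H_1$, to the scalar ODE $\dot s=-\sqrt{(2-H_0^2(1+e^{-s}))(1-e^{-s})}$ and then analyze it near $s=0$. The only (immaterial) difference is in the last step, where the paper concludes by comparison with $\dot s=-\gamma\sqrt{1-e^{-s}}$ while you test convergence of the time integral $\int_0^{s_0}ds/\sqrt{(2-H_0^2(1+e^{-s}))(1-e^{-s})}$ directly.
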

\begin{proof}
Assume $H_0=c$ along the geodesic. By Proposition \ref{prop2} we have
$$
g(\dot q, X)=H_0, \qquad g(\dot q,\dot q)=1.
$$
One easily solves this algebraic system for $\dot q_1$ and $\dot q_2$ and finds that, under the assumption $p_1-p_2<0$, the function $s(t)=q_1(t)-q_2(t)$ satisfies the following equation
$$
\dot s=-\sqrt{(2-c^2(1+e^{-s}))(1-e^{-s})}.
$$
Assume $|c|<1$. Then
$$
\sqrt{(2-c^2(1+e^{-s}))(1-e^{-s})}\geq\gamma\sqrt{1-e^{-s}}
$$
where $\gamma=\sqrt{2(1-c^2)}$ is constant along geodesics and one easily checks that if
$$
\dot s=-\sqrt{1-e^{-s}}
$$
then $s$ goes to $0$ in a finite time.

Similarly, if $|c|=1$ then
$$
\dot s=-(1-e^{-s})
$$
and one easily checks that $s(t)\to 0$ as $t\to\infty$.
\end{proof}

The condition on the sign of $p_1-p_2$ in Lemma \ref{lemma2} and Lemma \ref{lemma3} is merely a technical assumption that chooses an orientation of a geodesic such that it goes towards or backwards the singular set $q_1=q_2$. Now we can state the following
\begin{theorem}\label{thm1}
A 2-peakon solution to the Camassa-Holm equation with energy $H_1=c>0$ collides if and only if its momentum satisfies $|H_0|<\sqrt{c}$ and $p_1-p_2>0$ at $t=0$. Equivalently $p_1<0$ and $p_2>0$.
\end{theorem}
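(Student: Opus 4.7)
The plan is to combine the collision criteria of Lemmas \ref{lemma2} and \ref{lemma3}, which are stated at normalized energy, with a scaling argument and an elementary algebraic step to recast the sign condition in terms of individual momenta.

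First, I would reduce to the unit-energy case used in the lemmas. The Hamiltonian $H_1$ is homogeneous of degree two in $p$, so system \eqref{hameq} is invariant under the reparametrization $p\mapsto \lambda p$, $t\mapsto t/\lambda$; this rescaling sends $H_0\mapsto\lambda H_0$ and $H_1\mapsto\lambda^2 H_1$, while preserving the signs of $p_1-p_2$, $p_1$, $p_2$, and the property of reaching the singular set in finite time (only the collision time is rescaled). With $\lambda=1/\sqrt{c}$ the geodesic is sent to one with energy $1$, and the inequality $|H_0|<\sqrt{c}$ becomes $|H_0|<1$ in the rescaled variables.

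Next, invoke Lemmas \ref{lemma2} and \ref{lemma3}. Taken together they assert that, at unit energy, the geodesic reaches the singular set in finite time if and only if $|H_0|<1$ and $p_1-p_2<0$ at $t=0$; the borderline case $|H_0|=1$ yields only asymptotic approach and is therefore excluded from finite-time collision. Transferring back via the rescaling, finite-time collision of the original geodesic is equivalent to $|H_0|<\sqrt{c}$ together with the sign condition $p_1-p_2<0$ at $t=0$.

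Finally, I would rephrase the resulting conditions in terms of signs of the individual momenta. Using the convention $H_1=g(p,p)=p_1^2+p_2^2+2 p_1 p_2 e^{-s}$ fixed at the end of Section 2, with $s=q_1-q_2>0$, a direct expansion yields
$$
(p_1+p_2)^2-H_1=2 p_1 p_2(1-e^{-s}),
$$
so $|H_0|^2<H_1$ is equivalent to $p_1 p_2<0$. Combined with the requirement $p_1<p_2$, this forces precisely $p_1<0$ and $p_2>0$, completing the equivalence. I do not anticipate a substantial obstacle: all of the analytic content is already carried by Lemmas \ref{lemma2} and \ref{lemma3}, and what remains is scaling and sign bookkeeping.
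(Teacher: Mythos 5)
Your proposal is correct and follows essentially the same route as the paper: reduce to unit energy by the quadratic-homogeneity rescaling and then read the collision criterion off Lemmas \ref{lemma2} and \ref{lemma3}. The only genuine difference is in the last step: the paper passes to the dual vector $V=h(p,\cdot)$ and identifies the boundary cases $g(V,X)=\pm 1$ explicitly, whereas you use the identity $(p_1+p_2)^2-H_1=2p_1p_2(1-e^{-s})$ to conclude $|H_0|^2<H_1\iff p_1p_2<0$ directly; your version is shorter and makes the equivalence with $p_1<0<p_2$ transparent. Note also that you (correctly) work with the sign condition $p_1-p_2<0$, which is what Lemmas \ref{lemma2}--\ref{lemma3} and the conclusion $p_1<0$, $p_2>0$ actually require; the inequality $p_1-p_2>0$ appearing in the theorem statement and in the paper's own proof is evidently a sign typo, and your reading is the consistent one.
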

\begin{proof}
Note that a simple rescaling implies that one can consider the case $c=1$ only.  The first part of the theorem is a compilation of Lemma \ref{lemma2} and \ref{lemma3}. To get the second part of the theorem fix a vector $V=(v_1,v_2)$ and let $p=g(V,.)$. Then $p_1<0$ and $p_2>0$ is equivalent to
$$
v_1-e^{-s}v_2<0,\qquad -e^{-s}v_1+v_2>0.
$$
On the other hand, this is exactly the condition on $V$ that ensures that $g(V,V)=1$ and $|g(V,X)|<1$ with additional assumption $p_1-p_2>0$. Indeed, if $g(V,X)=1$ and $g(V,V)=1$ then $v_1=e^{-s}$ and $v_2=1$ and if $g(V,X)=-1$ and $g(V,V)=1$ then $v_1=-1$ and $v_2=-e^{-s}$.
\end{proof}

Now, we proceed to the main result of this section. Let us consider a geodesic heading to the singular set $q_1=q_2$ such that $H_1=1$ and $|H_0|=c$. According to Lemma \ref{lemma2} and Lemma \ref{lemma3} the geodesic is tangent to one of the two vector fields $V^+_c$ and $V^-_c$ defined by
$$
g(V_c^\pm,V_c^\pm)=1,\quad g(X,V_c^\pm)=\pm c, \quad g(X^\perp, V_c^\pm)<0.
$$
where as before $X=\partial_{q_1}+\partial_{q_2}$ is the Killing vector field and $X^\perp=\partial_{q_1}-\partial_{q_2}$ is a vector field perpendicular to $X$. This gives
$$
\begin{aligned}
&V_c^+=\frac{1}{2}\left(c(1+e^{-s})(\partial_{q_1}+\partial_{q_2})- \sqrt{(2-c^2(1+e^{-s}))(1-e^{-s})}(\partial_{q_1}-\partial_{q_2})\right)\\
&V_c^-=\frac{1}{2}\left(-c(1+e^{-s})(\partial_{q_1}+\partial_{q_2})- \sqrt{(2-c^2(1+e^{-s}))(1-e^{-s})}(\partial_{q_1}-\partial_{q_2})\right).
\end{aligned}
$$
The vector fields are originally defined in the region $q_1>q_2$, but extend uniquely to $\R^2$ such that
$$
V_c^+(q_1,q_2)=V_c^-(q_2,q_1).
$$
Note that the vector fields are not Lipshitz at points of the singular set $q_1=q_2$. Thus the integral curves of $V_c^\pm$ are not unique at these points.
\begin{lemma}\label{lemma4a}
Any integral curve of $V_c^+$ or $V_c^-$ corresponds to a solution of the Camassa-Holm equation.
\end{lemma}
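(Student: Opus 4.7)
The plan is to treat separately the open regions $q_1>q_2$ and $q_1<q_2$, and the diagonal $\{q_1=q_2\}$, and then glue the pieces.

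In the region $q_1>q_2$, $V_c^+$ is by construction the unique vector at each point satisfying $g(V,V)=1$, $g(X,V)=c$ and $g(X^\perp,V)<0$. My first step is to show that integral curves of $V_c^+$ are precisely the geodesics of $g$ with $H_1=1$, $H_0=c$ and $g(X^\perp,\dot q)<0$. If $\tilde q$ is the geodesic with $\dot{\tilde q}(0)=V_c^+(q(0))$, then $g(\dot{\tilde q},\dot{\tilde q})$ is conserved by metricity of the Levi--Civita connection and $g(\dot{\tilde q},X)$ is conserved by Proposition \ref{prop2}, so the first two defining conditions of $V_c^+$ are preserved along $\tilde q$. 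The open condition $g(X^\perp,\dot{\tilde q})<0$ is preserved by continuity, since its vanishing would force $\dot{\tilde q}$ to be proportional to $X$ and hence $c^2=g(X,X)=2/(1+e^{-s})$, which is ruled out for $|c|\le 1$ whenever $s>0$. Consequently $\dot{\tilde q}(t)=V_c^+(\tilde q(t))$ throughout and, by uniqueness of ODE solutions in the Lipschitz regime, $\tilde q$ coincides with the integral curve of $V_c^+$ starting at $q(0)$. The geodesic/peakon correspondence recalled in the introduction then identifies this integral curve with a $2$-peakon solution of \eqref{eqCH}. The argument for $V_c^-$ is identical, and the symmetry $V_c^+(q_1,q_2)=V_c^-(q_2,q_1)$ transports the conclusion to integral curves lying in $q_1<q_2$, which amounts simply to relabelling the two peakons.

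On the diagonal, a direct evaluation of the explicit formulas at $s=0$ gives $V_c^\pm=\pm cX$, so both fields are tangent to $\{q_1=q_2\}$. An integral curve on the diagonal therefore satisfies $q_1(t)=q_2(t)=q_*\pm c(t-t_*)$, and the corresponding function $u(t,x)=\pm c\,e^{-|x-q_1(t)|}$ is a classical $1$-peakon, hence a solution of \eqref{eqCH}.

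It remains to glue the pieces. An integral curve of $V_c^+$ starting off the diagonal reaches the singular set at the collision time $t^*$ provided by Lemma \ref{lemma3} and continues along the diagonal. Conservation of $H_0$ along the geodesic segment forces the total momentum $p_1+p_2$ to equal $c$ at $t=t^*$, which coincides with the momentum of the $1$-peakon, so $u$ is continuous in time through $t^*$. The step I expect to be most delicate is checking that this concatenation is a genuine weak solution of \eqref{eqCH} across the collision, and not just a solution on each side of $t^*$; this hinges on the continuity of $u$ and on the matching of momenta furnished by $H_0$.
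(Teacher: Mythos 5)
Your handling of the two regular pieces is fine and matches what the paper takes for granted: off the diagonal an integral curve of $V_c^\pm$ is a unit-speed geodesic with $H_0=\pm c$ (your argument that $g(X^\perp,\dot q)$ cannot vanish because that would force $c^2=g(X,X)=2/(1+e^{-s})>1$ is exactly the mechanism behind Lemmas \ref{lemma2} and \ref{lemma3}), and on the diagonal $V_c^\pm=\pm cX$ gives a travelling $1$-peakon. But the lemma's entire content is the gluing, and there you stop at precisely the point where the proof has to happen: you write that the delicate step is ``checking that this concatenation is a genuine weak solution of \eqref{eqCH} across the collision'' and then assert that this ``hinges on the continuity of $u$ and on the matching of momenta,'' without carrying out the check. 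Continuity of $u(t,\cdot)$ at $t^*$ (which anyway holds only in a topology weaker than $H^1$, since the $H^1$ norm drops from $2H_1$ to $2H_0^2$ at the collision) does not by itself yield the distributional identity: the terms $u_{xxt}$ and $uu_{xxx}$ carry Dirac masses whose coefficients $p_1,p_2$ blow up as $t\to t^*$, so one must verify that the weak formulation is satisfied through $t^*$ and not merely on each open time interval. The paper closes this gap in the proof of Theorem \ref{thm3}: substituting the peakon ansatz into \eqref{eqCH} against a test function shows that at points where $q_k=q_{k+1}$ the weak formulation is equivalent to the reduced Hamiltonian system \eqref{hamredeq} for $\psi_k=p_k+p_{k+1}$ together with the unchanged equations for the other variables, and this is exactly what the concatenated curve satisfies. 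That computation, or an equivalent one, is what your proposal is missing.

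A second, smaller omission: the lemma says \emph{any} integral curve of $V_c^\pm$, and because the fields fail to be Lipschitz on the diagonal the integral curves are non-unique there --- a curve may run along the diagonal for an arbitrary time and then leave it into the region $q_1<q_2$ (the ``splitting'' into a new $2$-peakon, including the creation of a peakon--antipeakon pair when $c=0$). Your gluing discussion only treats the curve that reaches the diagonal and stays on it; the departure from the diagonal requires the same weak-solution verification at the splitting time, and it is this case that makes the lemma non-trivial as a statement about \emph{all} integral curves.
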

\begin{proof}
An integral curve corresponds to a 2-peakon that collides at certain time $t^*$, then evolves as a 1-peakon for some time (arbitrary long) and finally splits to become a new 2-peakon. If $p_1+p_2=0$ then this corresponds to a creation of a peakon-antipeakon pair. The general proof is included in the proof of Theorem \ref{thm3} in the next section.
\end{proof}

\begin{theorem}\label{thm2}
A dissipative weak 2-peakon solution after a collision time $t^*$ becomes a 1-peakon solution with energy $H_0^2$. The momentum $H_0$ is preserved after the collision. The energy lost equals $H_1-H_0^2$.
\end{theorem}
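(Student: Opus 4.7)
The plan is to present the dissipative prolongation explicitly as a 1-peakon, verify that it satisfies both \eqref{dis1} and \eqref{dis2}, and then invoke uniqueness of dissipative weak solutions \cite{BC,J}. Let $t^*$ be the collision time and set $q^*:=q_1(t^*)=q_2(t^*)$. By Proposition \ref{prop2} the momentum $H_0$ is conserved, so $p_1(t)+p_2(t)\equiv H_0$ on $[0,t^*)$, and the candidate prolongation is the 1-peakon
\[
u(t,x)=H_0\,e^{-|x-q(t)|},\qquad q(t)=q^*+H_0(t-t^*),\qquad t\geq t^*,
\]
whose single position moves at speed $H_0$ and whose momentum is $H_0$.

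To justify admissibility I would revisit Lemma \ref{lemma4a}. A direct inspection of the formulas for $V_c^{\pm}$ at $s=0$ shows that the factor $\sqrt{(2-c^2(1+e^{-s}))(1-e^{-s})}$ vanishes there, so $V_c^{\pm}|_{\{q_1=q_2\}}=\pm c\, X$ is tangent to the singular set. The non-Lipschitz behaviour of $V_c^\pm$ at $\{q_1=q_2\}$ noted right before Lemma \ref{lemma4a} then permits a continuation that rests on the singular set indefinitely, and this branch is exactly the 1-peakon above: its phase-space velocity is $H_0\,X$, which matches $\pm c\,X$ with the sign determined by $\sgn(H_0)$.

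Both dissipative conditions are then immediate. For \eqref{dis1}, the 1-peakon satisfies $|\partial_xu|\leq|H_0|$, so the Oleinik-type bound holds uniformly for $t>t^*$. For \eqref{dis2}, since $\norm{u(t,\cdot)}_{H^1(\R)}$ is proportional to $H_1$ (in the rescaled normalization $H_1=g(p,p)$ adopted after Theorem \ref{thm_first_int}), the 1-peakon has $H_1=H_0^2$, while Theorem \ref{thm1} guarantees $H_0^2<H_1$ for the colliding 2-peakon; hence the $H^1$-norm jumps down at $t^*$ and is constant afterwards, so it is non-increasing on $[0,\infty)$. The energy lost across the collision is precisely $H_1-H_0^2$, and momentum is preserved since both sides carry $H_0$. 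By uniqueness of dissipative weak solutions \cite{BC,J}, this is \emph{the} prolongation, which proves the theorem.

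The main obstacle I anticipate is ruling out, without appealing to uniqueness, the competing scenario from Lemma \ref{lemma4a} in which the integral curve eventually leaves the singular set as a new 2-peakon. A direct argument would observe that such a re-emergence instantaneously lifts $\norm{u(t,\cdot)}_{H^1(\R)}$ from $H_0^2$ back to the original $H_1>H_0^2$, in conflict with \eqref{dis2}; making the instantaneous character of this jump fully rigorous is the delicate step, but the uniqueness result of \cite{BC,J} bypasses the bookkeeping.
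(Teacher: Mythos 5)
Your proposal is correct and follows essentially the same route as the paper: identify the prolongation with the integral curve of $V_c^\pm$ that rests on the singular set (i.e.\ the $1$-peakon of amplitude $H_0$), verify \eqref{dis1} by the boundedness of $\partial_xu$ on each of $[0,t^*)$ and $[t^*,\infty)$ and \eqref{dis2} via $H_0^2\leq H_1$ from Theorem \ref{thm1}, and then invoke the uniqueness of dissipative weak solutions from \cite{BC,J}. Your explicit formula $q(t)=q^*+H_0(t-t^*)$ and the remark that a re-splitting would violate \eqref{dis2} are consistent refinements of the paper's argument rather than a different method.
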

\begin{proof}
The solution corresponds to an integral curve of $V_c^\pm$ that stays in the singular set once it meets it. Introducing the variable $q$ that parametrises the singular set by the mapping $q\mapsto(q,q)$ we can consider a one-dimensional metric on the singular set with matrix $\tilde g=(1)$. Note that $\tilde g$ coincides with the restriction of $g$ to the singular set. The Killing vector field $X$ for $\tilde g$ is of the form $\partial_q$. Since $V_c^\pm$ restricted to the set $q_1=q_2$ equals $\pm c(\partial_{q_1}+\partial_{q_2})$, which corresponds to $\pm c\partial_q$, we get that the value of the momentum $H_0$ for the 1-peakon equals the original value of $H_0$.

According to \cite{J} it is enough to check that the solution satisfies conditions \eqref{dis1} and \eqref{dis2}. This is straightforward. Namely, the first condition is automatically satisfied, because $\partial_xu$ is estimated by a constant on each interval $[0,t^*)$ and $[t^*,\infty)$ as a solution to \eqref{hameq} with $n=2$ or $n=1$, respectively. The second condition follows from the fact that, due to Theorem \ref{thm1}, $H_0^2\leq H_1$ provided that a 2-peakon solution admits a collision. 
\end{proof}

\section{Dissipative prolongation of $n$-peakons}
We return now to the general case of $n$-peakon solutions to the Camassa-Hom equation. For this we consider a geodesic $t\mapsto q(t)$ of the metric $g=h^{-1}$ where as before
$$
h=(e^{-(q_i-q_j)})_{i,j=1,\ldots,n}
$$
and we assume, without loss of generality, that $q_1>q_2>\cdots>q_n$.

\begin{lemma}\label{lemma4}
Let $t\mapsto q(t)$ be a geodesic of $g$ approaching the singular set $\{q_k=q_{k+1}\}$ at time $t^*$. Then the limit
$$
\lim_{t\to t^*}p_k(t)+p_{k+1}(t)
$$
as well as
$$
\lim_{t\to t^*} p_j(t), \qquad j\neq k,k+1
$$
exist and are finite. Moreover the vector
$$
V=\lim_{t\to t^*}\dot q(t)
$$
exists and is tangent to the singular set. Finally, the $(n-1)$-dimensional restriction $\tilde g$ of the metric $g$ to the singular set is a well defined metric and $\tilde g(V,V)\leq g(\dot q,\dot q)$.
\end{lemma}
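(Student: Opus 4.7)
The plan is to combine the polynomial structure of the first integrals $H_0,\ldots,H_{n-1}$ (Theorem~\ref{thm_first_int}), conservation of the energy $H_1$, and the continuity of roots of a polynomial in its coefficients, as flagged in the introduction. Because $p$ itself may blow up near $\{q_k=q_{k+1}\}$, the analysis has to be carried out in reduced variables: I set $\tilde p_k:=p_k+p_{k+1}$ and $\tilde p_j=p_j$ for $j\neq k,k+1$, so that $\tilde p\in\R^{n-1}$, and introduce the auxiliary quantities $D:=p_k-p_{k+1}$, $\delta:=q_k-q_{k+1}>0$, together with the reduced configuration $\tilde q$ obtained by identifying the $k$-th and $(k+1)$-st entries of $q$.

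The first step is an a-priori Euclidean bound on $\dot q$: since $\tr h\leq n$, the smallest eigenvalue of $g=h^{-1}$ is at least $1/n$, so conservation of $2H_1=g(\dot q,\dot q)$ gives $\|\dot q(t)\|\leq\sqrt{2nH_1}$. The second step is the key structural observation that at $\delta=0$ each $H_s$ depends only on $\tilde p$ (and on $\tilde q$). For $s=1$ this is immediate from $H_1=\tfrac12 p^Thp$ and the fact that $h|_{\delta=0}$ has kernel spanned by $e_k-e_{k+1}$. For general $s$ it can be verified directly from the formula of Theorem~\ref{thm_first_int}: the coefficients $c_\rho e^{-\sum_j(q_{i_j}-q_{i_{\rho(j)}})}$ that involve $q_k$ or $q_{k+1}$ collapse symmetrically in $p_k,p_{k+1}$ at $\delta=0$ so that only the combination $\tilde p_k=p_k+p_{k+1}$ appears.

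Expanding $2H_1=p^Thp$ in the new variables yields the identity
$$2H_1=\tilde p^T\tilde h\,\tilde p+(1-e^{-\delta})L(q,\tilde p)\,D+\tfrac12(1-e^{-\delta})D^2=\tilde p^T\tilde h\,\tilde p+\tfrac12(1-e^{-\delta})(D+L)^2-\tfrac12(1-e^{-\delta})L^2,$$
where $\tilde h=\tilde h(\tilde q)$ is the reduced Gram matrix (positive definite at $\tilde q^*$ because no other pair collides at $t^*$) and $L$ is linear in $\tilde p$. Since $(1-e^{-\delta})\to 0$ and $|L|\leq C\|\tilde p\|$, the term $-\tfrac12(1-e^{-\delta})L^2$ is subordinate to $\tilde p^T\tilde h\tilde p$ in a small left neighbourhood of $t^*$; combined with the nonnegativity of $\tfrac12(1-e^{-\delta})(D+L)^2$, this yields simultaneously a uniform bound $\|\tilde p\|\leq C_1$ and a uniform bound $\sqrt{1-e^{-\delta}}\,|D|\leq C_2$. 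The latter implies $(1-e^{-\delta})D\to 0$, which, combined with the identity $\dot q_k-\dot q_{k+1}=-(1-e^{-\delta})D+O(\delta)$, forces tangency $V_k=V_{k+1}$ for any accumulation point $V$ of $\dot q(t)$.

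The last step promotes boundedness of $\tilde p$ to convergence. Along any subsequence $t_n\to t^*$ extract a limit $\tilde p(t_n)\to\tilde p^\infty$; this limit must satisfy the overdetermined polynomial system $F_s(\tilde q^*,\tilde p^\infty)=H_s(q(0),p(0))$ for $s=0,\ldots,n-1$, where $F_s$ denotes the restriction of $H_s$ to $\delta=0$. Because this system has a discrete solution set, the continuity of $t\mapsto\tilde p(t)$ prevents the trajectory from jumping between distinct solutions, so $\tilde p(t)\to\tilde p^*$ as $t\to t^*$; this is exactly where the continuity of roots of polynomials~\cite{W} is invoked. The limit of $\dot q$ is then $V=\tilde h(\tilde q^*)\tilde p^*$, the restricted metric $\tilde g=\tilde h^{-1}$ is well defined, and $\tilde g(V,V)=\tilde p^{*T}\tilde h\,\tilde p^*$; passing to the limit in the completion of the square above yields
$$g(\dot q,\dot q)-\tilde g(V,V)=\lim_{t\to t^*}\tfrac12(1-e^{-\delta})(D+L)^2\geq 0,$$
the asserted inequality. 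The principal obstacle is the possible blow-up of $D$ near the singular set, tamed by the completion-of-the-square structure of Step~3 and the continuity-of-roots argument of Step~4.
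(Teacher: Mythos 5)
Your Steps 1--3 (the a priori bound on $\dot q$ via the smallest eigenvalue of $g$, the completion of the square in $2H_1$ yielding boundedness of $\tilde p$ and of $\sqrt{1-e^{-\delta}}\,|D|$, and the tangency of accumulation points of $\dot q$) are sound and run parallel to the paper's asymptotic analysis of the metric in the basis $\partial_{q_k}\pm\partial_{q_{k+1}}$. The fatal gap is in Step 4. An accumulation point $\tilde p^\infty$ does \emph{not} satisfy $F_s(\tilde q^*,\tilde p^\infty)=H_s(q(0),p(0))$ for $s\geq 1$: your own identity
$$
2H_1=\tilde p^T\tilde h\,\tilde p+\tfrac12(1-e^{-\delta})(D+L)^2-\tfrac12(1-e^{-\delta})L^2
$$
shows that while the last term vanishes, the middle term is only \emph{bounded}; since $|D|$ blows up like $\delta^{-1/2}$, the quantity $(1-e^{-\delta})D^2\sim\delta D^2$ converges in general to a strictly positive number --- it is essentially the lost energy $H_1-\tilde g(V,V)$, which Theorem \ref{thm2} shows equals $H_1-H_0^2>0$ already for $n=2$. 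Hence $F_1(\tilde q^*,\tilde p^\infty)=H_1-\lim\tfrac12(1-e^{-\delta})(D+L)^2<H_1$ in general, and the same contamination by non-vanishing $D$-terms occurs in every $H_s$ with $s\geq1$; only the $s=0$ equation survives the limit. The limiting system you invoke is therefore the wrong one and the convergence argument collapses. This is exactly the difficulty the paper flags in the introduction ($p$ explodes near the singular set), and its remedy is different: it keeps $\xi_k=\sqrt{s}\,(p_k-p_{k+1})$ as an additional unknown alongside $\psi_k=p_k+p_{k+1}$ and the $p_j$, and proves --- by analysing how the recursion operator $S=P_1^{-1}\circ P_0$ produces new factors of $p_k-p_{k+1}$ only with coefficients of order $O(\sqrt{s})$ --- that every $H_i$ is a polynomial in $(p_j,\psi_k,\xi_k)$ with coefficients continuous and bounded up to $t=t^*$, so that the continuity-of-roots argument applies to the full system of $n$ conserved quantities in these $n$ variables.

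A second, lesser issue: even for a corrected limiting system you assert discreteness of the solution set without proof. This is not automatic from counting equations --- the paper's own remark that $H_2=H_0^3$ when $q_1=q_2=q_3$ shows the restricted first integrals can become functionally dependent on singular strata, so an overdetermined system need not be zero-dimensional. (Your observation that the set of subsequential limits of a bounded continuous path is connected, hence a single point once discreteness is granted, is fine; it is the discreteness itself that needs an argument, which is what the citation of Whitney's continuity of roots is doing in the paper.)
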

\begin{proof}
The proof consists of two ingredients. Firstly we analyse the asymptotic properties of $g$ as $q_k-q_{k+1}\to 0$ and find estimates for coefficients of  $\dot q$. Then we exploit the bi-Hamiltonian structure of the equations and prove the existence of $\lim_{t\to t^*}\dot q(t)$. We denote for simplicity $x_i=e^{q_i}$ and find the following
$$
\begin{aligned}
&g_{11}=\frac{x_1^2}{x_1^2-x_2^2},\\
&g_{ii}=\frac{(x_{i-1}^2-x_{i+1}^2)x_i^2}{(x_{i-1}^2-x_i^2)(x_i^2-x_{i+1}^2)},\qquad i=2,\ldots,n-1,\\
&g_{nn}=\frac{x_{n-1}^2}{x_{n-1}^2-x_n^2},\\
&g_{ii+1}=g_{i+1i}=-\frac{x_ix_{i+1}}{x_i^2-x_{i+1}^2},\qquad i=1,\ldots,n-1,
\end{aligned}
$$
and other coefficients $g_{ij}$ of metric $g$ are zero.  Later, it will be more convenient to consider $g$ in the basis
$$
B=(\partial_{q_1},\ldots,\partial_{q_{k-1}},\partial_{q_k}+\partial_{q_{k+1}},\partial_{q_k}-\partial_{q_{k+1}},\partial_{q_{k+2}},\ldots,\partial_{q_n} ).
$$
For this we compute directly 
$$
\begin{aligned}
&g(\partial_{q_k}+\partial_{q_{k+1}},\partial_{q_k}+\partial_{q_{k+1}})=\frac{2(x_{k-1}^2x_{k+1}-x_kx_{k+2}^2)x_kx_{k+1} -(x_k^2x_{k+1}^2+x_{k-1}^2x_{k+2}^2)(x_k-x_{k+1})}{(x_{k-1}^2-x_k^2)(x_k+x_{k+1})(x_{k+1}^2-x_{k+2}^2)}\\
&g(\partial_{q_k}+\partial_{q_{k+1}},\partial_{q_k}-\partial_{q_{k+1}})=\frac{x_k^2x_{k+1}^2-x_{k-1}^2x_{k+2}^2}{(x_{k-1}^2-x_k^2)(x_{k+1}^2-x_{k+2}^2)}\\
&g(\partial_{q_k}-\partial_{q_{k+1}},\partial_{q_k}-\partial_{q_{k+1}})=\frac{2(x_{k-1}^2x_{k+1}+x_kx_{k+2}^2)x_kx_{k+1} -(x_k^2x_{k+1}^2+x_{k-1}^2x_{k+2}^2)(x_k+x_{k+1})}{(x_{k-1}^2-x_k^2)(x_k-x_{k+1})(x_{k+1}^2-x_{k+2}^2)}
\end{aligned}
$$
In particular, $g(\partial_k+\partial_{k+1},\partial_k+\partial_{k+1})$ converges to
$$
\frac{(x_{k-1}^2-x_{k+2}^2)x_k^2}{(x_{k-1}^2-x_k^2)(x_k^2-x_{k+2}^2)}
$$
as $q_k-q_{k+1}\to 0$. Let us parametrize the singular set by $(q_1,q_2,\ldots,q_{n-1})$ as follows
$$
(q_1,q_2,\ldots,q_{n-1})\mapsto(q_1,q_2,\ldots,q_{k-1},q_k,q_k,q_{k+1},\ldots ,q_{n-1}).
$$
We get that $g$ restricted to the singular set is well defined. Moreover it is of the form of the original $g$ but in one dimension less. Equivalently one can see that $h$ restricted to the singular set is non-degenerate.

Consider a geodesic $t\mapsto q(t)$ that attains the set $\{q_k=q_{k+1}\}$ at time $t^*$. Without loss of generality we assume that $\norm{\dot q(t)}=1$ for $t<t^*$. We shall denote
$$
s=q_k-q_{k+1}.
$$
Thus
$$
\lim_{t\to t^*}s(t)=0.
$$
Let us write $\dot q(t)=\sum_{i=1}^na_i(t)\partial_{q_i}$ and denote 
$$
b(t)=\frac{1}{2}(a_k(t)+a_{k+1}(t)),\qquad c(t)=\frac{1}{2}(a_k(t)-a_{k+1}(t)).
$$
Note that $g(\partial_{q_k}-\partial_{q_{k+1}},\partial_{q_k}-\partial_{q_{k+1}})$ computed above is the only coefficient of $g$ in the basis $B$ that is unbounded as $q_k-q_{k+1}\to 0$. Hence
$$
\lim_{t\to t^*}c(t)=0,
$$
because $\norm{\dot q}$ is constant. Precisely, $c(t)$ has the following asymptotic behaviour
$$
c(t)=O(\sqrt{s(t)}),
$$
because $g(\partial_{q_k}-\partial_{q_{k+1}},\partial_{q_k}-\partial_{q_{k+1}})^{-1}=O(s)$. Additionally a simple comparison of $g$ with the standard flat metric in $\R^n$ gives that all coefficients $a_i(t)$ are bounded. 

Ultimately we shall prove that the limits $\lim_{t\to t^*}a_i(t)$, $i\neq k, k+1$, and $\lim_{t\to t^*}b(t)$ exist. However, we consider first the associated co-vector $p(t)=g(\dot q(t),.)$ with entries
$$
p_i=a_{i-1}g_{i-1i}+a_ig_{ii}+a_{i+1}g_{ii+1}
$$
(accordingly modified for $i=1$ and $i=n$). Note that, since all $a_i$ are bounded, we get that only $p_k$ and $p_{k+1}$ are possibly unbounded as $t\to t^*$. However
$$
\begin{aligned}
p_k+p_{k+1}&=-a_{k-1}\frac{x_{k-1}x_k}{x_{k-1}^2-x_k^2}-a_{k+2}\frac{x_{k+1}x_{k+2}}{x_{k+1}^2-x_{k+2}^2}\\
&+a_k\frac{x_k(x_{k-1}^2+x_kx_{k+1})}{(x_k+x_{k+1})(x_{k-1}^2-x_k^2)} +a_{k+1}\frac{x_{k+1}(x_kx_{k+1}+x_{k+2}^2)}{(x_k+x_{k+1})(x_{k+1}^2-x_{k+2}^2)}
\end{aligned}
$$
is bounded, and for
$$
\begin{aligned}
p_k-p_{k+1}&=-a_{k-1}\frac{x_{k-1}x_k}{x_{k-1}^2-x_k^2}+a_{k+2}\frac{x_{k+1}x_{k+2}}{x_{k+1}^2-x_{k+2}^2}\\
&+a_k\frac{x_k(x_{k-1}^2-x_kx_{k+1})}{(x_k-x_{k+1})(x_{k-1}^2-x_k^2)} -a_{k+1}\frac{x_{k+1}(x_kx_{k+1}-x_{k+2}^2)}{(x_k-x_{k+1})(x_{k+1}^2-x_{k+2}^2)}
\end{aligned}
$$
we have
$$
\lim_{t\to t^*}|p_k(t)-p_{k+1}(t)|=\infty.
$$
Nevertheless, since $x_k(t)\to x_{k+1}(t)$ as $t\to t^*$, we approximately have
$$
s(t)(p_k(t)-p_{k+1}(t))^2=s(t)c(t)^2\frac{(x_k+x_{k+1})^2}{(x_k-x_{k+1})^2}+O(s(t))
$$
which is bounded, because $c(t)=O(\sqrt{s(t)})$.

Now we will prove that all $p_i(t)$, $i\neq k, k+1$, as well as $p_k(t)+p_{k+1}(t)$ and $s(t)(p_k(t)-p_{k+1}(t))^2$ have limits as $t\to t^*$. Once we do this we will also have that $V=\lim_{t\to t^*}\dot q(t)$ exists and is tangent to the singular set, because we already know that $\lim_{t\to t^*}c(t)=0$. For this we will prove that all first integrals $H_i$ are polynomial functions of $\psi_k(t)$, $\xi_k(t)$ and $p_i(t)$ for $i\neq k, k+1$, where
$$
\psi_k(t)=p_k(t)+p_{k+1}(t)
$$
and
$$
\xi_k(t)=\sqrt{s(t)}(p_k(t)-p_{k+1}(t))
$$
with bounded and continuous coefficients on the interval $[0,t^*]$. These will imply that the limits $p_i(t)$, $i\neq k, k+1$, as well as $p_k(t)+p_{k+1}(t)$ and $s(t)(p_k(t)-p_{k+1}(t))^2$ with $t\to t^*$ exist because solutions to the polynomial equations $H_i=\const$ depend continuously on the coefficients of polynomials \cite[Appendix V, Section 4]{W}. The continuity of roots is usually deduced from Rouch\'e's theorem in a context of a complex polynomial. In our setting we have  a curve $t\mapsto p(t)$ which give rise to roots $(p_i,\psi_k,\xi_k)$,  $i\neq k, k+1$,  of our system for each $t\in[0,t^*)$. Since the roots are real (and bounded) we get in a limit also a real root, even though, \emph{a priori}, we apply the continuity result to the complexified equations.

The momentum $H_0$ is clearly a polynomial function of $p_i$, $i\neq k, k+1$, and $\psi_k$ with constant coefficients. Further, one can also easily write the energy $H_1$ as a polynomial function of $p_i$, $i\neq k, k+1$, $\psi_k$ and $\xi_k$. Indeed 
$$
\begin{aligned}
H_1(q,p)&=\frac{1}{2}(1+e^{-s})\psi_k^2+\frac{1}{2}(1-e^{-s})\frac{1}{s}\xi_k^2\\
&+\sum_{i\neq k,k+1}\left((e^{-|q_k-q_i|}+e^{-|q_{k+1}-q_i|})\psi_kp_i+(e^{-|q_k-q_i|}-e^{-|q_{k+1}-q_i|})\frac{1}{\sqrt{s}}\xi_kp_i\right)\\
&+\sum_{i,j\neq k,k+1}e^{-|q_i-q_j|}p_ip_j.
\end{aligned}
$$
Then one sees that the coefficients of the polynomial are continuous functions of $q_i$'s that are bounded on the trajectory $t\mapsto q(t)$. Namely 
$$
\lim_{t\to t^*}\frac{1-e^{-s(t)}}{s(t)}=1, \qquad\lim_{t\to t^*}\frac{e^{-|q_k(t)-q_i(t)|}-e^{-|q_{k+1}(t)-q_i(t)|}}{\sqrt{s(t)}}=0.
$$
To prove that $H_i$ are of the desired form, for $i>1$ it is sufficient to analyse the recursive operator $S$ used in \eqref{firstint} and given explicitly in terms of $P_0$ and $P_1$ in Theorem \ref{thmbiham}. In fact it is sufficient to consider $P_0$ and show that when it acts on $dH_i$ then the terms that give new factors of the form $(p_k-p_{k+1})$ in $dH_{i+1}$ are always accompanied with a coefficient $A$ such that $\frac{A}{\sqrt{s}}$ is bounded. This can be checked directly in a way similar to the decomposition of $H_1$ presented above. For instance, we rewrite the sum $\sum_{i,j=1}^np_ie^{-|q_i-q_j|}\partial_{p_i}\wedge\partial_{q_j}$ in the following way
$$
\begin{aligned}
\sum_{i,j=1}^np_ie^{-|q_i-q_j|}\partial_{p_i}\wedge\partial_{q_j}&=\frac{1}{2}(1-e^{-s})(p_k\partial_{p_k}-p_{k+1}\partial_{p_{k+1}})\wedge(\partial_{q_k}-\partial_{q_{k+1}})\\
&+\frac{1}{2}\sum_{i\neq k,k+1}\left((e^{-|q_k(t)-q_i(t)|}+e^{-|q_{k+1}(t)-q_i(t)|})(p_k\partial_{p_k}+p_{k+1}\partial_{p_{k+1}})\right.\\
&\qquad\qquad+\left.(e^{-|q_k(t)-q_i(t)|}-e^{-|q_{k+1}(t)-q_i(t)|})(p_k\partial_{p_k}-p_{k+1}\partial_{p_{k+1}})\right)\wedge\partial_{q_i}\\
&+\frac{1}{2}\sum_{i\neq k,k+1}p_i\partial_{p_i}\wedge\left((e^{-|q_k(t)-q_i(t)|}+e^{-|q_{k+1}(t)-q_i(t)|})(\partial_{q_k}+\partial_{q_{k+1}})\right.\\
&\qquad\qquad+\left.(e^{-|q_k(t)-q_i(t)|}-e^{-|q_{k+1}(t)-q_i(t)|})(\partial_{q_k}-\partial_{q_{k+1}})\right)\\
&+\sum_{i,j\neq k,k+1}p_ie^{-|q_i-q_j|}\partial_{p_i}\wedge\partial_{q_j}.
\end{aligned}
$$
and similar formulae hold for other components of $P_0$. Note that new factor $(p_k-p_{k+1})$ can appear when $P_1^{-1}$ is applied to $\partial_{q_k}-\partial_{q_{k+1}}$ or when $p_k\partial_{p_k}-p_{k+1}\partial_{p_{k+1}}$ acts on $\psi_k$ by differentiation. In both cases there is a coefficient in $P_0$ of order $O(\sqrt{s})$ at least like in the case of $H_1$.

Now, in order to complete the proof it is sufficient to argue that $\tilde g(V,V)\leq H_1$. This follows from the fact that, according to our earlier analysis of $g$ one can decompose $g(\dot q(t),\dot q(t))$ as a sum $c(t)^2g_1(t)+c(t)g_2(t)+g_3(t)$ where $g_2$ is bounded, $g_3$ converges to $\tilde g(V,V)$ and $g_1$ is positive and such that $c(t)^2g_1(t)$ is bounded.
\end{proof}

\begin{theorem}\label{thm3}
A dissipative weak $n$-peakon solution after a collision time $t^*$ becomes an $(n-1)$-peakon solution such that the momentum $H_0$ is preserved after the collision.
\end{theorem}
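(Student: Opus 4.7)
My plan is to follow the template of Theorem \ref{thm2}, replacing the ad hoc 2-dimensional analysis with the structural information extracted in Lemma \ref{lemma4}. The starting point is a geodesic $t\mapsto q(t)$ of $g$ that approaches the singular set $\{q_k=q_{k+1}\}$ at time $t^*$ (we may assume, possibly after relabelling and after shrinking the interval, that only one coordinate coincidence occurs in the limit, so that Lemma \ref{lemma4} applies). By that lemma, the velocity $V=\lim_{t\to t^*}\dot q(t)$ exists, lies tangent to the singular set, and satisfies $\tilde g(V,V)\leq g(\dot q,\dot q)=H_1$, while the momenta $p_j(t)$ with $j\neq k,k+1$ and the sum $p_k(t)+p_{k+1}(t)$ have finite limits.

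For the prolongation I would start the geodesic of the restricted metric $\tilde g$ from the collision point $q(t^*)$ in the direction $V$. Parametrising the singular set by $(q_1,\ldots,q_{k-1},q_k,q_{k+2},\ldots,q_n)$ via $q_{k+1}=q_k$, one sees from the explicit formula
\[
h=(e^{-|q_i-q_j|})_{i,j}
\]
that the restriction of $h$ to the singular set has exactly the form of the analogous matrix in one dimension less (with a merged coordinate whose residual ``$p$'' is $p_k+p_{k+1}$). Hence $\tilde g=\tilde h^{-1}$ is itself the Camassa--Holm metric in dimension $n-1$, and the continuation is an $(n-1)$-peakon solution in the sense of the introduction. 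Preservation of the momentum then follows exactly as in the proof of Theorem \ref{thm2}: the Killing vector field $X=\sum_{i=1}^n\partial_{q_i}$ restricts to $\sum_{i}\partial_{q_i}$ on the parametrising $(n-1)$-dimensional space, so by Proposition \ref{prop2} applied in both dimensions,
\[
H_0^{\text{new}}=\tilde g(V,X|_{\tilde\Sigma})=\lim_{t\to t^*}g(\dot q(t),X)=H_0^{\text{old}}.
\]

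It then remains to check that the prolonged solution is the dissipative weak solution, for which by \cite{J} it suffices to verify \eqref{dis1} and \eqref{dis2}. Condition \eqref{dis1} is automatic: on each maximal interval between collisions, $u(t,x)$ evolves as a smooth peakon configuration and $\partial_x u$ is bounded by a constant depending only on the dimension and $H_1$, so \eqref{dis1} holds on each such interval and hence globally. Condition \eqref{dis2} is exactly the inequality $\tilde g(V,V)\leq H_1$ provided by Lemma \ref{lemma4}, together with the fact that $H_1$ is conserved between collisions.

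The main obstacle I anticipate is handling the iteration: after the first collision one obtains an $(n-1)$-peakon, and to prolong it past its own collision time one applies Lemma \ref{lemma4} again. This is fine as long as, at each collision, only one pair of consecutive coordinates coincides. For a truly simultaneous coincidence of more than two coordinates one should either argue that by continuity the above analysis applies merging the cluster into a single coordinate in one step, or pass to the limit of a sequence of collisions treated pairwise; the key input is that the restricted metric $\tilde g$ retains the Camassa--Holm form and the Killing field $X$ restricts consistently, so iterating the argument produces an $(n-r)$-peakon with the same $H_0$ after any number $r$ of merged coordinates. Uniqueness of dissipative weak solutions then guarantees that the prolongation obtained in this way is the dissipative one.
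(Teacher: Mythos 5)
Your proposal correctly identifies Lemma \ref{lemma4} as the key input and your treatment of momentum preservation and of conditions \eqref{dis1}--\eqref{dis2} matches the paper's. But there is a genuine gap at the centre of the argument: you pass directly from ``the restricted metric $\tilde g$ has the Camassa--Holm form in dimension $n-1$, so the geodesic continuation is an $(n-1)$-peakon solution'' to invoking uniqueness of dissipative weak solutions via \cite{J}. That invocation presupposes that the concatenated function $u(t,x)$ --- the $n$-peakon for $t<t^*$ glued to the $(n-1)$-peakon for $t\geq t^*$ --- is a weak solution of \eqref{eqCH} on the \emph{whole} time interval, including across $t=t^*$ where two of the exponential profiles merge and the momenta $p_k$, $p_{k+1}$ individually blow up. This is precisely what the paper spends most of its proof establishing: it substitutes the peakon ansatz into \eqref{eqCH} distributionally (computing the $\delta$ and $\delta'$ contributions), tests against $\varphi$, and derives that where $q_k=q_{k+1}$ the weak formulation is equivalent to the differentiability of $\psi_k=p_k+p_{k+1}$ together with the reduced equation \eqref{hamredeq}, which is then recognized as \eqref{hameq} in dimension $n-1$ with $\psi_k$ in place of $p_k,p_{k+1}$. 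Note that the paper explicitly defers the proof of Lemma \ref{lemma4a} to this computation, which signals that ``geodesic of the restricted metric'' $\Rightarrow$ ``weak solution of the PDE'' is the nontrivial step, not a definition one can appeal to. Your proof needs either this distributional verification or an argument that a function which is a weak solution on $(0,t^*)$ and on $(t^*,\infty)$ and is continuous in time into a suitable space is automatically a weak solution globally; you supply neither.

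Two smaller points. First, your reduction to the case where only one coordinate coincidence occurs in the limit is asserted by relabelling, but Lemma \ref{lemma4} as stated treats a single pair $\{q_k=q_{k+1}\}$, and a simultaneous merge of a larger cluster is not covered by iterating it; your closing paragraph acknowledges this honestly but does not resolve it (the paper is silent on this as well, so this is not a defect relative to the paper). Second, your claim that $\partial_x u$ is bounded ``by a constant depending only on the dimension and $H_1$'' on each intercollision interval is stronger than needed and not obviously true near $t^*$, where $p_k-p_{k+1}\to\infty$; what is true, and what \eqref{dis1} requires, is only a one-sided (upper) bound, which is how the paper phrases it.
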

\begin{proof}
Let $u(t,x)$ be a general $n$-peakon solution. We compute
$$
\begin{aligned}
&u_t=\sum_i \dot p_ie^{-|x-q_i|}+\sum_ip_i\dot q_ie^{-|x-q_i|}\sgn(x-q_i),\\
&u_x=-\sum_i p_ie^{-|x-q_i|}\sgn(x-q_i),\\
&u_{xx}=\sum_ip_ie^{-|x-q_i|}-2\sum_ip_i\delta(x-q_i),\\
&u_{xxt}=\sum_i\dot p_ie^{-|x-q_i|}+\sum_ip_i\dot q_ie^{-|x-q_i|}\sgn(x-q_i) -2\sum_i\dot p_i\delta(x-q_i)+2\sum_ip_i\dot q_i\delta'(x-q_i),\\
&u_{xxx}=-\sum_i p_ie^{-|x-q_i|}\sgn(x-q_i)-2\sum_ip_i\delta'(x-q_i),
\end{aligned}
$$
Where $\delta$ is the Dirac delta and $\delta'$ is its derivative understood in the functional sense. Substituting the formulae to \eqref{eqCH} we get
$$
\sum_{i,j}(\dot p_i-2p_ip_je^{-|x-q_i|}\sgn(x-q_j))\delta(x-q_i)- \sum_{i,j}p_i(\dot q_i-p_je^{-|x-q_i|})\delta'(x-q_i)=0.
$$
Evaluating this on a test function $\varphi$ and having in mind that integration by parts gives $f\delta[\varphi]=-f'\delta[\varphi]-f\delta[\varphi']$ we get the following
$$
\sum_{i,j}(\dot p_i-p_ip_je^{-|x-q_i|}\sgn(x-q_j))\delta(x-q_i)[\varphi] +\sum_{i,j}p_i(\dot q_i-p_je^{-|x-q_i|})\delta(x-q_i)[\varphi']=0
$$
which holds for any $\varphi$. Thus, if all $q_i$ are different we get that $(q_i,p_i)$ are differentiable and satisfy the system
$$
\dot p_i-\sum_j p_ip_je^{-|q_j-q_i|}\sgn(q_i-q_j)=0,\qquad \dot q_i-\sum_jp_je^{-|q_j-q_i|}=0,
$$
which coincides with the Hamiltonian system \eqref{hameq}. On the other hand, if $q_k=q_{k+1}$ for some $k$ then $p_k+p_{k+1}$ is differentiable and satisfies
\begin{equation}\label{hamredeq}
\frac{d}{dt}(p_k+p_{k+1})-\sum_j(p_k+p_{k+1})p_je^{-|q_j-q_k|}\sgn(q_k-q_j)=0
\end{equation}
and the additional equations are unchanged (note however that the terms involving $p_k$ and $p_{k+1}$ can be grouped and written in terms of $\psi_k=p_k+p_{k+1}$). Conversely any solution $(q,p)$ of \eqref{hameq} such that $p_k+p_{k+1}$ is a well defined differentiable function satisfying \eqref{hamredeq} at points where $q_k=q_{k+1}$ give rise to a week solution of the Camassa-Holm equation.

Now, according to Lemma \ref{lemma4} the vector
$$
(p_1(t^*),\ldots,p_{k-1}(t^*),p_k(t^*)+p_{k+1}(t^*),p_{k+2}(t^*),\ldots,p_n(t^*))
$$
is well defined at the collision time $t^*$ and it defines the initial data for $(n-1)$-peakon. Note that that equation \eqref{hamredeq} together with the remaining equations for $p_i$'s and $q_i$'s (excluding the equations for $p_k$ and $p_{k+1}$) has exactly the form of \eqref{hameq} in $n-1$ dimensions. Thus an $n$-peakon after a collision can evolve as $(n-1)$-peakon. The momentum $H_0$ is preserved after the collision. Moreover, Lemma \ref{lemma4} implies that the energy $H_1$ decreases for such a solution thus \eqref{dis2} is satisfied. The condition \eqref{dis1} is satisfied too, since, as in the 2-dimensional case, $\partial_xu$ is estimated by a constant on $[0,t^*)$ and on $[t^*,\infty)$. 
\end{proof}

\paragraph{Splitting of $n$-peakons.} It follows from the proof of Theorem \ref{thm3} that an $n$-peakon can split at any time to become an $n+1$-peakon. The momentum is preserved in this process however the energy grows. Moreover, the solutions after a splitting violate condition \eqref{dis1}. Indeed, we only have an estimate of $\partial_xu$ on $[t,\infty)$ for each $t>t^*$, where $t^*$ is the splitting time and in fact $\partial_xu$ is arbitrary large for $t$ close to $t^*$.

\end{document}